\documentclass{amsart}
\usepackage{graphicx} 

\title{Ricci solitons from the perspectives of energy function}

\usepackage{orcidlink}

\author{Shubham Yadav \orcidlink{0000-0002-3906-5739}}
\address{Harish-Chandra Research Institute,
Chhatnag Road, Jhunsi, Prayagraj-211019, India} 
\address{Homi Bhabha National Institute,
Training School Complex, Anushakti Nagar, Mumbai 400094, India}
\email{yadavshubham3597@gmail.com}

\author{Hemangi Madhusudan Shah \orcidlink{0000-0002-2665-5094}} 
\address{Harish-Chandra Research Institute,
Chhatnag Road, Jhunsi, Prayagraj-211019, India} 
\address{Homi Bhabha National Institute,
Training School Complex, Anushakti Nagar, Mumbai 400094, India}
\email{hemangimshah@hri.res.in}

\subjclass[2020]{53B20, 53B21, 53C25}
\keywords{Ricci soliton, Cigar soliton, $L^1$ Liouville Theorem,  Energy Function, Omori-Yau Maximum Principle, $f$-Volume}

\date{\today}

\usepackage{amsfonts}
\usepackage{amsmath}
\usepackage{amssymb}
\usepackage{amsthm}

\usepackage{thmtools, thm-restate}

\usepackage{enumitem}

\usepackage[maxbibnames=99]{biblatex}
\usepackage{cleveref}

\usepackage{xcolor}

\newcommand{\abs}[1]{\left| #1 \right|}
\newcommand{\lie}[1]{\mathcal{L}_{#1}}
\newcommand{\derivative}{\mathrm{d}}

\newcommand{\brackets}[1]{\left( #1 \right)}

\DeclareMathOperator*{\ricci}{Ric}
\DeclareMathOperator*{\Qricci}{Q}
\DeclareMathOperator*{\scalar}{S}
\DeclareMathOperator*{\divergence}{div}
\DeclareMathOperator*{\trace}{tr}
\DeclareMathOperator*{\curvature}{R}
\DeclareMathOperator*{\hessian}{Hess}
\DeclareMathOperator*{\Vol}{Vol}

\theoremstyle{plain}
\newtheorem{proposition}{Proposition}

\theoremstyle{plain}
\newtheorem{corollary}{Corollary}

\theoremstyle{plain}
\newtheorem{theorem}{Theorem}

\theoremstyle{plain}
\newtheorem{lemma}{Lemma}

\theoremstyle{remark}
\newtheorem{remark}{Remark}

\newtheorem{example}{Example}

\begin{document}

\begin{abstract}
This article explores to what extent the geometry of gradient Ricci solitons extends to non-gradient Ricci solitons. 
The primary tool is the energy function $E$ of the soliton. 
We study consequences of various bounds on $E$. 
Under mild assumptions on the scalar curvature, we prove a weighted $L^1$-Liouville type theorem for both the usual Laplacian and the drifted Laplacian $\Delta_V$ associated to soliton vector field $V$, the former of which implies that Ricci solitons with bounded energy function have at most one nonparabolic end. 
Finally, we show that the measure $e^{-E} \derivative{\Vol_{g}}$ is finite for complete shrinking Ricci solitons, partially generalizing a result of Aaron Naber. 
As a consequence, non-gradient shrinking Ricci solitons also have finite fundamental groups, as in the gradient case.
\end{abstract}

\maketitle
\newpage

\tableofcontents

\section{Introduction}
A Ricci soliton is a quadruple $(M^n,g,V,\lambda)$, where $(M^n,g)$ is a complete Riemannian manifold, $V \in \chi (M),$ a complete vector field, and $\lambda \in \mathbb{R} $ satisfying
\begin{equation}\label{eqn: Ricci Soliton equation}
    \frac{1}{2} \lie{V} g + \ricci = \frac{\lambda}{2} g,
\end{equation}
where $\lie{V}$ is the Lie derivative with respect to $V$ and $\ricci$ is the Ricci tensor of $(M,g)$. 
The vector field $V$ is called the soliton vector field.
The soliton is said to be shrinking when $\lambda>0$, steady when $\lambda = 0$, or expanding when $\lambda<0$. 
Immediately, one notices that all Einstein manifolds can be considered as Ricci solitons by taking $V$ as zero or a Killing field.
Such solitons are said to be trivial.
If the soliton vector field is gradient, i.e. $V = \nabla f$, then the soliton is said to be gradient and the function $f$ is called the soliton function. 
Thus, the soliton function of a gradient Ricci soliton satisfies the equation
\begin{equation}
	\hessian f + \ricci = \frac{\lambda}{2} g.
\end{equation}
Following \cite{sym12020289}, we define the energy function of a Ricci soliton as
\begin{equation}
    E = \frac{1}{2} g(V,V).
\end{equation}

Ricci solitons which are not a priori gradient are called {\it generic} Ricci solitons (see \cite{MR3546774,MR3063556}).
This article primarily aims to explore the relationship between the geometry of Ricci solitons and the properties of their associated energy functions.
This was  also explored by Sharief Deshmukh and Hana Alsodais  in \cite{sym12020289}, primarily relying on the fact that \cref{eqn: Ricci Soliton equation} is equivalent to 
$$ \nabla_X V = \frac{\lambda}{2} X + \phi X  - \Qricci X, $$
where $Q$ is the Ricci tensor, and $\phi$ being the (1,1) tensor given by 
$$ \frac{1}{2} \derivative \eta(X,Y)=g(\phi X, Y), $$
with $\eta$ as the dual 1-form of the potential vector field $V$, i.e., 
$\eta(X) = g(X, V).$  
Our approach is quite different (independent of $\eta$)  and the geometric results obtained in the article 
are also of different flavour.

Throughout this article, we will denote the scalar curvature by $\scalar$ and the $(1,1)$-tensor field dual to Ricci tensor on $M$ by $\Qricci $. 
Thus, for any vector fields $X, Y$ on $M$, 
$$ g(\Qricci X, Y) = \ricci (X,Y) = g( X, \Qricci Y) . $$
The traceless Ricci tensor will be denoted by $\mathring{\Qricci}$,
$$ \mathring{\Qricci} X = \Qricci X - \frac{\scalar}{n} X . $$
The norm of any vector field or tensor field $T$ will be denoted by $\abs{T}$.
 

It is known that for a gradient Ricci soliton with soliton function $f$ (see Lemma 1.1  of \cite{MR2648937})
\begin{equation}\label{eqn: S + 2E - lambda f}
    \nabla (\scalar + \abs{\nabla f}^2 - \lambda f) = 0.
\end{equation}
We establish a weaker form of this for non-gradient Ricci solitons as:
\begin{restatable}{corollary}{scalarEnergyNongradient}
	\label{cor: S + 2E - lambda f for nongradient case}
	For any Ricci soliton  $(M^n,g,V,\lambda)$,    
    \begin{equation}\label{nongradient}
        \divergence ( \nabla S + \nabla E + \nabla_V V -\lambda V ) = 0 .
        \end{equation} 
\end{restatable}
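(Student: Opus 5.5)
The plan is to decompose $\nabla V$ into symmetric and skew parts, use this to simplify $\nabla E + \nabla_V V$, and then reduce the statement to a scalar identity for $\Delta \scalar$ on an arbitrary Ricci soliton.

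\textbf{Step 1: splitting $\nabla V$.} By \cref{eqn: Ricci Soliton equation}, the symmetric part of $\nabla V$ is $\frac{\lambda}{2}I - \Qricci$. So we write
$$\nabla_X V = \tfrac{\lambda}{2} X - \Qricci X + A X,$$
with $A$ a skew-symmetric $(1,1)$-tensor. Tracing gives $\divergence V = \frac{n\lambda}{2} - \scalar$.

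Next we compute $\nabla E$. For any $X$,
$$X(E) = g(\nabla_X V, V) = \tfrac{\lambda}{2} g(X,V) - g(X,\Qricci V) - g(X, AV),$$
so $\nabla E = \frac{\lambda}{2}V - \Qricci V - AV$. On the other hand, $\nabla_V V = \frac{\lambda}{2}V - \Qricci V + AV$. The skew parts cancel when we add, giving
$$\nabla E + \nabla_V V = \lambda V - 2\Qricci V .$$
Hence the vector field in \cref{nongradient} equals $\nabla\scalar - 2\Qricci V$. The claim is therefore equivalent to $\Delta \scalar = 2\divergence(\Qricci V)$.

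\textbf{Step 2: computing $\divergence(\Qricci V)$.} We have
$$\divergence(\Qricci V) = (\divergence \Qricci)(V) + \trace(\Qricci\circ\nabla V).$$
By the contracted second Bianchi identity, $\divergence \Qricci = \frac12 \derivative \scalar$. In the trace term, the skew part $A$ pairs to zero against the symmetric $\Qricci$. This leaves
$$\divergence(\Qricci V) = \tfrac12 V(\scalar) + \tfrac{\lambda}{2}\scalar - \abs{\ricci}^2 .$$
So it remains to prove the identity
$$\Delta \scalar = V(\scalar) + \lambda \scalar - 2\abs{\ricci}^2$$
for an arbitrary, not necessarily gradient, Ricci soliton.

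\textbf{Step 3: the identity for $\Delta\scalar$ (main obstacle).} This is the only nontrivial step, since we cannot use a potential function. The first approach is to use the associated self-similar Ricci flow. Take $g(t) = (1-\lambda t)\varphi_t^* g$, where $\varphi_t$ is generated by $V/(1-\lambda t)$. Then $\partial_t g = -\lambda g + \lie{V} g = -2\ricci$ at $t=0$, consistent with \cref{eqn: Ricci Soliton equation}. Scaling gives $\scalar(t) = (1-\lambda t)^{-1}\varphi_t^*\scalar$, so $\partial_t \scalar|_{t=0} = \lambda\scalar + V(\scalar)$. Comparing with the standard evolution $\partial_t \scalar = \Delta\scalar + 2\abs{\ricci}^2$ yields the identity.

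If one prefers to avoid the flow, the fallback is a direct tensor computation. Take the divergence of \cref{eqn: Ricci Soliton equation} and use the commutation formula $\divergence(\lie{V} g) = \Delta V + \nabla \divergence V + \ricci(V)$, together with $\divergence V = \frac{n\lambda}{2}-\scalar$. Then take one more divergence and compare with $\trace$ of $\lie V \ricci$. This is where sign bookkeeping must be done carefully. Either way, Steps 1–3 combine to give $\divergence(\nabla\scalar - 2\Qricci V) = 0$, which is \cref{nongradient}.
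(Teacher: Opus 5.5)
Your proof is correct, and its overall architecture matches the paper's. The paper obtains the corollary by combining \cref{eqn: laplacian Scalar in traceless ricci} (equivalently $\Delta\scalar - V\scalar - \lambda\scalar = -2\abs{\Qricci}^2$) with the formula \cref{div E} for $\divergence(\Qricci V)$.

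Your Steps 1 and 2 repackage the second ingredient. The symmetric/skew splitting of $\nabla V$ reproduces $2\Qricci V = \lambda V - \nabla E - \nabla_V V$ from \cref{eqn: Q of V}. The vanishing of $\trace(\Qricci A)$ is exactly why \Cref{lem: divergence of TX} only sees the pairing with $\lie{V}g$.

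The genuine difference is Step 3. The paper proves the $\Delta\scalar$ identity statically through Yano's formulae: it expresses $\lie{V}\nabla$ via $\nabla\ricci$, passes to $\lie{V}\curvature$, contracts to $\lie{V}\ricci$ and traces. This is self-contained but long and heavy on bookkeeping. Your self-similar flow argument gets the identity in a few lines. Scaling and diffeomorphism invariance give $\partial_t\scalar = \lambda\scalar + V\scalar$, and the evolution $\partial_t\scalar = \Delta\scalar + 2\abs{\ricci}^2$ closes it. The price is importing that evolution equation, i.e. the first variation formula for scalar curvature, which is where the tensor computation is hidden. Only the first variation of $\scalar$ in the direction $\lie{V}g - \lambda g = -2\ricci$ enters, so the argument is purely local and needs neither completeness of $V$ nor a global flow.

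On your fallback: the comparison with $\trace\,\lie{V}\ricci$ is unnecessary, and what is actually missing there is a commutation identity. After the first divergence you have $\Delta V = -\Qricci V$. Take a second divergence and use the Ricci identity $\divergence(\Delta V) = \Delta(\divergence V) + \divergence(\Qricci V)$ together with $\divergence V = \frac{n\lambda}{2} - \scalar$. This gives $\Delta\scalar = 2\divergence(\Qricci V)$ at once, which by your Step 1 is exactly the statement. So the corollary is just the double divergence of \cref{eqn: Ricci Soliton equation}. This route makes both Step 2 and the $\abs{\ricci}^2$ identity unnecessary, and is shorter than either the paper's argument or the flow argument.
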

Note that for a gradient Ricci soliton, $V=\nabla f$ and $\nabla E = \nabla_{\nabla f} \nabla f$. Thus, the above equation reduces to
$$ \Delta ( \scalar + 2E - \lambda f ) = 0.  $$

The boundedness of the energy function is a particularly restrictive condition for an expanding soliton. This is captured in the following theorem.
Note that a zero energy function implies that a Ricci soliton is trivial.
\begin{restatable}{theorem}{UpperBoundOnE}
	\label{thm: Upper bound on E}
	Let $(M^n,g,V,\lambda)$ be a Ricci soliton with $E$ as its energy function.
	\begin{enumerate}
		\item If the soliton is nontrivial and $E$ attains a maximum, 
			then the soliton is shrinking or steady.
			If further $\ricci(V,V)\leq 0$, 
			then the soliton is steady.  
			\item For a nontrivial soliton, if $E$ is bounded from above, then the soliton is steady or shrinking.
		\item If $\ricci (V,V) \leq 0$ and $\abs{\nabla E}\in\mathcal{L}^1(M)$, 
			then the soliton vector field is parallel and the soliton is trivial. 
	\end{enumerate}
\end{restatable}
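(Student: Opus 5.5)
The plan rests on two pointwise identities for the energy function, both derived directly from the soliton equation \cref{eqn: Ricci Soliton equation}. Write it as $g(\nabla_X V,Y)+g(\nabla_Y V,X)=\lambda g(X,Y)-2\ricci(X,Y)$.

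\emph{First identity.} Since $\derivative E(X)=g(\nabla_X V,V)$, taking $X=Y=V$ gives
$$V(E)=\lambda E-\ricci(V,V).$$

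\emph{Second identity.} Take the divergence of the soliton equation. The left side is $\tfrac12(\Delta V+\nabla\divergence V+\Qricci V)$, where $\Delta$ is the rough Laplacian and we commute covariant derivatives. The right side is $-\tfrac12\nabla\scalar$ by the contracted Bianchi identity. Tracing the equation gives $\divergence V=\tfrac{n\lambda}{2}-\scalar$, so $\nabla\divergence V=-\nabla\scalar$, and therefore $\Delta V=-\Qricci V$. Hence
$$\Delta E=\abs{\nabla V}^2-\ricci(V,V),\qquad \Delta_V E:=\Delta E-V(E)=\abs{\nabla V}^2-\lambda E .$$

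For (1): if the soliton is nontrivial then $V\not\equiv0$, so $\max E>0$. At a maximum point $p$ we have $\nabla E(p)=0$, hence $V(E)(p)=0$ and $\lambda E(p)=\ricci(V,V)(p)$. We also have $\Delta E(p)\le 0$, which gives $\ricci(V,V)(p)\ge\abs{\nabla V}^2(p)\ge0$. Combining these yields $\lambda E(p)\ge0$ with $E(p)>0$, so $\lambda\ge0$. If moreover $\ricci(V,V)\le0$, then $\lambda E(p)=\ricci(V,V)(p)\le0$, forcing $\lambda=0$.

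For (2), the maximum need not be attained, so we pass to an Omori--Yau maximum principle for the drifted Laplacian $\Delta_V$. The relevant Bakry--\'Emery tensor is $\ricci+\tfrac12\lie{V}g=\tfrac{\lambda}{2}g$, which is bounded below, and $(M,g)$ is complete. This is exactly the hypothesis under which the Omori--Yau principle for $\Delta_V$ holds, also for non-gradient $V$ (a Chen--Qiu type result via the comparison for $\Delta_V r$).

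Suppose $\lambda<0$ and $\sup E<\infty$. Take a sequence $x_k$ with $E(x_k)\to\sup E$ and $\Delta_V E(x_k)\le 1/k$. Then
$$\frac1k\ge \Delta_V E(x_k)\ge -\lambda E(x_k)=\abs{\lambda}E(x_k)\to\abs{\lambda}\sup E .$$
So $\sup E=0$, hence $V\equiv0$ and the soliton is trivial. This contradicts nontriviality, so $\lambda\ge0$. I expect this step to be the main obstacle: one must justify the Omori--Yau principle for the non-symmetric drift $\Delta_V$ with only the Bakry--\'Emery bound available. I would first try the standard proof, applying the maximum principle to $E-\varepsilon\rho$ for a suitable distance function $\rho$, and use the Laplacian comparison for $\Delta_V r$ derived from $\ricci+\tfrac12\lie{V}g\ge\tfrac{\lambda}{2}g$. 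That comparison needs a bound on $g(V,\nabla r)$, which can be controlled by integrating the soliton equation along geodesics.

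For (3): $\ricci(V,V)\le0$ gives $\Delta E=\abs{\nabla V}^2-\ricci(V,V)\ge0$, so $E$ is subharmonic. Take cutoffs $\varphi_R$ equal to $1$ on $B_R$, supported in $B_{2R}$, with $\abs{\nabla\varphi_R}\le C/R$. Then
$$0\le\int\varphi_R\,\Delta E=-\int g(\nabla\varphi_R,\nabla E)\le\frac CR\int_M\abs{\nabla E}\to0 .$$
Hence $\Delta E\equiv0$, so $\abs{\nabla V}^2=\ricci(V,V)\le 0$, which forces $\nabla V=0$. Thus $V$ is parallel, $\lie{V}g=0$, and $\ricci=\tfrac{\lambda}{2}g$, i.e. the soliton is trivial.
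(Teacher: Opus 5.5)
Your proposal is correct and follows the paper's proof. It uses the same identities $\Delta E=\abs{\nabla V}^2-\ricci(V,V)$ and $V E=\lambda E-\ricci(V,V)$, the Omori--Yau principle for $\Delta_V$ in (2) (which the paper simply cites from \cite{MR3546774} rather than reproving), and the same subharmonic-plus-$L^1$ cutoff argument in (3). The only small deviation is in the second half of (1): you read off $\lambda E(p)=\ricci(V,V)(p)\le 0$ directly at the maximum point, whereas the paper uses subharmonicity of $E$ and the strong maximum principle to get $E$ constant and $\nabla V=0$. Your version is slightly more elementary, while the paper's gives the extra information that $V$ is parallel.
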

In particular, this theorem partially recovers the well-known result by Perelman (see (2.4) of \cite{perelman2002entropyformularicciflow}): There are no nontrivial compact expanding or steady Ricci solitons.

    Let $\Omega$ be a compact subset of $M$. With respect to $\Omega$, an end of the manifold is an unbounded connected component of $M\setminus \Omega$. 
    If $\Omega_i$ is an exhaustion of $M$ by compact sets, then the number of ends with respect to $\Omega_i$ is monotonically nondecreasing sequence. 
    If this sequence is bounded, $M$ is said to have finitely many ends.
    An end is said to be parabolic if it does not admit a positive harmonic function $u$ that is constant on the its boundary and has a vanishing lim inf towards the infinity of the end. 
    Let $\mathcal{H}_D (M) $ denote the space of harmonic functions $u$ on $M$ with finite dirichlet energy, i.e.,
    $$ \int \abs{\nabla u}^2 \, \derivative{\Vol}_g < \infty . $$
    Li and Tam \cite{MR1158340} showed that the number of nonparabolic ends of a manifold is bounded above by the dimension of $\mathcal{H}_D (M)$.
    In \cite{MR3023848}, using this estimate, it was proven that a steady gradient Ricci soliton has at most one nonparabolic end.
    We show that their technique can be modified to solitons with bounded energy function and a scalar curvature lower bound. 
    Note that for any Ricci soliton, the scalar curvature $\scalar$ is always bounded below (see Theorem 2.14, \cite{MR4624811}). 
    In view of this, the assumption of lower bound on the scalar curvature is mild one.
    In particular, the hypothesis of the following theorem is satisfied by steady gradient Ricci solitons.
    \begin{restatable}{theorem}{noHarmonicWithFiniteEnergy}
        \label{thm: no harmonic function with finite energy}
        On a noncompact Ricci soliton $(M^n, g, V, \lambda)$ with $\scalar \geq \frac{(n-2)}{2} \lambda $,
    	any harmonic function $u$ with
        \begin{equation}\label{eqn: finite dirichlet energy}
            \int \abs{\nabla u}^2  \, \derivative {\Vol}_g <\infty \text{ and } 
             \int \abs{\nabla u}^2 \sqrt{E} \, \derivative {\Vol}_g <\infty
        \end{equation}
        is constant.
    \end{restatable}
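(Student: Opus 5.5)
The plan is to integrate the Bochner formula for $u$ against a cutoff $\varphi^2$. The Ricci term is rewritten with the soliton equation, and the derivative falling on $V$ is moved by integration by parts. The hypothesis $\scalar\geq \frac{(n-2)}{2}\lambda$ should then make the curvature contribution nonnegative, and the two integrability conditions in \eqref{eqn: finite dirichlet energy} should kill all cutoff error terms. Let $\varphi$ be a standard cutoff with $\varphi=1$ on $B(p,R)$, $\varphi=0$ off $B(p,2R)$ and $\abs{\nabla\varphi}\leq C/R$. Since $u$ is harmonic, Bochner gives
\[
\tfrac12\Delta\abs{\nabla u}^2=\abs{\hessian u}^2+\ricci(\nabla u,\nabla u).
\]
From \cref{eqn: Ricci Soliton equation}, $\ricci(\nabla u,\nabla u)=\frac{\lambda}{2}\abs{\nabla u}^2-g(\nabla_{\nabla u}V,\nabla u)$. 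Taking the trace gives $\divergence V=\frac{n\lambda}{2}-\scalar$.

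The key step is handling $\int\varphi^2 g(\nabla_{\nabla u}V,\nabla u)$ without ever differentiating $V$. Since $\Delta u=0$, we have
\[
\divergence\bigl(g(V,\nabla u)\nabla u\bigr)=g(\nabla_{\nabla u}V,\nabla u)+\hessian u(V,\nabla u),
\]
and $\hessian u(V,\nabla u)=\frac12 V(\abs{\nabla u}^2)$. Integrating against $\varphi^2$ and integrating by parts in both terms gives
\[
\int\varphi^2 g(\nabla_{\nabla u}V,\nabla u)=-\int 2\varphi\, g(\nabla\varphi,\nabla u)g(V,\nabla u)+\tfrac12\int\abs{\nabla u}^2\bigl(\varphi^2\divergence V+2\varphi\, g(\nabla\varphi,V)\bigr).
\]
Every term containing $\nabla\varphi$ is bounded by $\frac{C}{R}\int\abs{V}\abs{\nabla u}^2=\frac{C}{R}\int\sqrt{2E}\,\abs{\nabla u}^2$. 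By the second condition in \eqref{eqn: finite dirichlet energy}, this tends to $0$ as $R\to\infty$. Substituting $\divergence V$ therefore yields
\[
\int\varphi^2\ricci(\nabla u,\nabla u)=\frac12\int\varphi^2\abs{\nabla u}^2\Bigl(\scalar-\frac{(n-2)}{2}\lambda\Bigr)+o(1)\geq o(1).
\]

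For the left side of Bochner, I would write $\int\varphi^2\frac12\Delta\abs{\nabla u}^2=-\int 2\varphi\abs{\nabla u}\, g(\nabla\varphi,\nabla\abs{\nabla u})$. Kato's inequality $\abs{\nabla\abs{\nabla u}}\leq\abs{\hessian u}$ and Young's inequality then bound this by $\frac12\int\varphi^2\abs{\hessian u}^2+\frac{C}{R^2}\int\abs{\nabla u}^2$. Combining the pieces gives $\frac12\int\varphi^2\abs{\hessian u}^2\leq \frac{C}{R^2}\int\abs{\nabla u}^2+o(1)$. Letting $R\to\infty$ and using finite Dirichlet energy, $\hessian u\equiv0$. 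Hence $\nabla u$ is parallel and $\abs{\nabla u}$ is constant.

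If that constant is nonzero, de Rham splitting shows $M$ splits off a line isometrically, so $\Vol(M)=\infty$. This contradicts $\int\abs{\nabla u}^2<\infty$, so $u$ is constant. The main obstacle is the middle step: one must avoid $\abs{\nabla V}$ entirely, because only $\abs{V}\sim\sqrt{E}$ is controlled. The identity $\hessian u(V,\nabla u)=\frac12V(\abs{\nabla u}^2)$ is what makes this possible. One also needs a smoothness or approximation remark for Kato's inequality at zeros of $\nabla u$, which can be handled via $\sqrt{\abs{\nabla u}^2+\epsilon}$.
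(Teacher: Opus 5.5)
Your proof is correct, and its core is the paper's. The paper also rewrites $\ricci(\nabla u,\nabla u)$ via the soliton equation and applies the divergence theorem to $(Vu)\varphi^2\nabla u$ and $\varphi^2\abs{\nabla u}^2V$, so only $\abs{V}=\sqrt{2E}$ enters the cutoff errors.

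The routes differ in the endgame. The paper inserts Kato's inequality into Bochner's formula and keeps only $\abs{\nabla\abs{\nabla u}}^2$. It therefore concludes only that $\abs{\nabla u}$ is constant and $\bigl(\scalar-\frac{(n-2)}{2}\lambda\bigr)\abs{\nabla u}^2\equiv 0$. It then rules out a nonconstant $u$ by noting that $\scalar\equiv\frac{(n-2)}{2}\lambda$, $\Vol(M)<\infty$ and $\sqrt{E}\in\mathcal{L}^1(M)$, which contradicts \Cref{prop: orientable RS constant scalar finite volume mod V notin L1} (Gaffney's Stokes theorem applied to $V$). That lemma needs $\frac{(n-2)}{2}\lambda\neq\frac{n\lambda}{2}$, i.e.\ $\lambda\neq 0$. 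So the paper's argument does not by itself close the steady equality case $\scalar\equiv 0$. It can be patched: \cref{eqn: laplacian Scalar in norm ricci} then gives $\Qricci=0$, and Bochner gives $\hessian u=0$, which is exactly your endgame. By keeping the full $\abs{\hessian u}^2$ term you get $\hessian u\equiv 0$ directly. Your conclusion is then purely Riemannian, uniform in $\lambda$, and needs neither orientability nor Gaffney's theorem.

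Two small corrections. First, Kato's inequality and the $\epsilon$-regularisation are unnecessary: $\frac12\nabla\abs{\nabla u}^2=\hessian u(\nabla u,\cdot)$ gives $\abs{\tfrac12\nabla\abs{\nabla u}^2}\leq\abs{\hessian u}\abs{\nabla u}$ everywhere.

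Second, de Rham's theorem is not the right justification for infinite volume. It only splits the universal cover, and a parallel field alone does not force infinite volume; think of $\partial_x$ on a flat torus. What you need is that $\nabla u$ is the gradient of a global function. Since $\hessian u=0$ and $\abs{\nabla u}=c>0$, the flow $\Phi_t$ of $\nabla u/c$ is complete and isometric, and satisfies $u\circ\Phi_t=u+ct$. Hence it maps $u^{-1}([0,c))$ onto each $u^{-1}([kc,(k+1)c))$, $k\in\mathbb{Z}$, so $\Vol(M)=\infty$. Equivalently, $M\cong u^{-1}(0)\times\mathbb{R}$ globally.
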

    The theorem implies the following corollary, which can be seen as a generalisation of Corollary 4.2 of \cite{MR3023848}.
    \begin{corollary}
        Any noncompact Ricci soliton $(M^n, g, V, \lambda)$ with $\scalar \geq \frac{(n-2)}{2} \lambda $ and energy function $E$ bounded above has at most one nonparabolic end.
    \end{corollary}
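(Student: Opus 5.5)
The plan is to argue by contradiction, combining the Li--Tam estimate quoted above with \cref{thm: no harmonic function with finite energy}.

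First, if $M$ is compact it has no ends, so there is nothing to prove. Assume therefore that $M$ is noncompact and has at least two nonparabolic ends. By the result of Li and Tam \cite{MR1158340}, the number of nonparabolic ends is at most $\dim \mathcal{H}_D(M)$, so $\dim \mathcal{H}_D(M) \geq 2$. The constants form a one-dimensional subspace of $\mathcal{H}_D(M)$. Hence there is a nonconstant harmonic function $u$ with
$$ \int \abs{\nabla u}^2 \, \derivative{\Vol}_g < \infty . $$

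Concretely, the Li--Tam construction produces such a $u$ directly. It is a limit of harmonic functions on an exhaustion $\Omega_i$ with boundary values $1$ on one nonparabolic end and $0$ on the others, and it satisfies $0<u<1$. It has $\sup u = 1$ along the first end and $\liminf u = 0$ along the second, and nonparabolicity is exactly what keeps it from degenerating to a constant. I would simply cite this.

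Next, the weighted condition in \eqref{eqn: finite dirichlet energy} follows from boundedness of $E$. If $E \leq C$ on $M$ for some constant $C\geq 0$, then, since $E \geq 0$ is well defined,
$$ \int \abs{\nabla u}^2 \sqrt{E} \, \derivative{\Vol}_g \leq \sqrt{C} \int \abs{\nabla u}^2 \, \derivative{\Vol}_g < \infty . $$
Thus $u$ satisfies both conditions of \eqref{eqn: finite dirichlet energy}. Since the curvature hypothesis $\scalar \geq \frac{(n-2)}{2}\lambda$ is assumed, \cref{thm: no harmonic function with finite energy} forces $u$ to be constant. This contradicts the choice of $u$, so $M$ has at most one nonparabolic end.

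The only delicate point is the existence of a nonconstant finite-energy harmonic function once there are two nonparabolic ends. That step is entirely contained in the Li--Tam theory, so I would invoke it as a black box, just as is done for steady gradient solitons in \cite{MR3023848}. Everything else is the one-line estimate using $\sup E < \infty$.
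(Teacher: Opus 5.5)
Your proposal is correct and follows the paper's own argument. Bounded $E$ makes the weighted condition in \eqref{eqn: finite dirichlet energy} automatic for every finite-energy harmonic function, so \Cref{thm: no harmonic function with finite energy} forces $\dim \mathcal{H}_D(M)=1$, and Li--Tam gives the conclusion; framing this as a contradiction rather than directly computing the dimension is only cosmetic.
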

    \begin{proof}
        We see that for a soliton with bounded energy function $E$, \Cref{thm: no harmonic function with finite energy} implies $\mathrm{dim} \mathcal{H}_D (M) = 1 $. 
        It then follows from \cite{MR1158340}, that $M$ has at most one nonparabolic end.
    \end{proof}

    The drifted Laplacian, with respect to a vector field $V$, of a function $u$ is defined as 
    $\Delta_{V} u =   \Delta u - g(V,\nabla u) $. 
    If the vector field $V$ is gradient, say $V=\nabla f$, the operator 
    $\Delta_f u = \Delta u - g(V, \nabla u)$ is called the weighted Laplacian. 
    The weighted Laplacian $\Delta_f$ is a self-adjoint operator with respect to the measure 
    $e^{-f} \derivative{\Vol} g$.
    A function $u$ is said to be $V$- harmonic if $\Delta_V u = 0$. Similarly, it is said to be $f$-harmonic if $\Delta_f u = 0$. 
    In \cite{MR2843238}, Munteanu and Wang studied the dimension of the space of $f$-harmonic functions with some bounds.
    Similar to the previous theorem, we have the following result regarding $V$-harmonic functions on a noncompact Ricci soliton.
    \begin{restatable}{theorem}{NoVHarmonic}
    	\label{thm: No V-harmonic function with constant energy and scalar bounded below}
        On a noncompact Ricci soliton $(M^n, g, V, \lambda)$ with $\scalar> \frac{(n-2)}{2} \lambda $,
    	any $V$-harmonic function $u$ with
        $$ \int \abs{\nabla u}^2 E \, \derivative {\Vol}_g <\infty \text{ and } 
             \int \abs{\nabla u}^2 \, \derivative {\Vol}_g <\infty $$
        is constant.
    \end{restatable}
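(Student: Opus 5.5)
We argue via a Bochner identity for $|\nabla u|^2$ adapted to the drifted Laplacian, combined with an integration by parts in which the soliton equation supplies the Ricci term, and then a cutoff argument using the two integrability hypotheses. For $V$-harmonic $u$ we have $\Delta u = g(V,\nabla u)$. Bochner's formula gives
\[
\frac12\Delta|\nabla u|^2 = |\hessian u|^2 + g(\nabla u,\nabla \Delta u) + \ricci(\nabla u,\nabla u).
\]
The soliton equation gives $\ricci(\nabla u,\nabla u) = \frac{\lambda}{2}|\nabla u|^2 - g(\nabla_{\nabla u}V,\nabla u)$. Also
\[
g(\nabla u,\nabla\, g(V,\nabla u)) = g(\nabla_{\nabla u}V,\nabla u) + \hessian u(V,\nabla u).
\]
The $\nabla V$ terms cancel, and we obtain
\[
\frac12\Delta_V|\nabla u|^2 = |\hessian u|^2 + \frac{\lambda}{2}|\nabla u|^2 .
\]
This is the same identity as in the gradient case. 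On its own it is not enough when $\lambda<0$, so the scalar curvature hypothesis has to be brought in.

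To use $\scalar>\frac{n-2}{2}\lambda$, trace the soliton equation: $\divergence V = \frac{n\lambda}{2}-\scalar$. The plan is to multiply the identity above by $\varphi^2$, where $\varphi$ is a cutoff equal to $1$ on $B_R$ and supported in $B_{2R}$ with $|\nabla\varphi|\le C/R$, and integrate against $\derivative\Vol_g$ rather than a weighted measure, since $V$ is not gradient. Integrating the drift term by parts gives
\[
-\int \varphi^2 g(V,\nabla|\nabla u|^2)/2 = \frac12\int |\nabla u|^2\bigl(\varphi^2\divergence V + 2\varphi\, g(V,\nabla\varphi)\bigr),
\]
and the Laplacian term gives $-\int \varphi\, g(\nabla\varphi,\nabla|\nabla u|^2)$. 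Collecting terms yields
\[
\int\varphi^2\Bigl(|\hessian u|^2 + \frac{\lambda}{2}|\nabla u|^2 - \frac12\bigl(\tfrac{n\lambda}{2}-\scalar\bigr)|\nabla u|^2\Bigr)
= -\int\varphi\, g(\nabla\varphi,\nabla|\nabla u|^2) + \int \varphi|\nabla u|^2 g(V,\nabla\varphi).
\]
The coefficient of $|\nabla u|^2$ on the left is $\frac12\bigl(\scalar-\frac{(n-2)}{2}\lambda\bigr)>0$. So the left side dominates $\int\varphi^2|\hessian u|^2$ plus a strictly positive multiple of $\int \varphi^2|\nabla u|^2$.

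For the right side, Kato's inequality gives $|\nabla|\nabla u|^2|\le 2|\nabla u||\hessian u|$. By Cauchy--Schwarz the first term is bounded by $\frac12\int\varphi^2|\hessian u|^2 + 2\int|\nabla\varphi|^2|\nabla u|^2$; the Hessian part is absorbed on the left and the remainder is $\le C R^{-2}\int|\nabla u|^2\to0$. The second term is bounded by $\frac{C}{R}\int_{B_{2R}\setminus B_R}|\nabla u|^2\sqrt{2E}$. Since $\sqrt{E}\le \frac12(1+E)$, this is controlled by $\frac{C}{R}$ times $\int|\nabla u|^2(1+E)$, which is finite by hypothesis, so it tends to $0$. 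Letting $R\to\infty$ gives
\[
\int \bigl(\scalar-\tfrac{(n-2)}{2}\lambda\bigr)|\nabla u|^2\le 0 .
\]
Strict positivity of the weight then forces $\nabla u\equiv0$, so $u$ is constant.

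The main obstacle is getting the drift integration by parts right with the unweighted measure. Since there is no weight $e^{-f}$, the divergence of $V$ enters, and one must check that the sign and coefficient combine into exactly $\scalar-\frac{n-2}{2}\lambda$. If the bookkeeping gives a different constant, I would redo the integration by parts pairing $\Delta u\cdot\divergence(\varphi^2\nabla u)$ directly. This explains why the threshold in the statement is $\frac{n-2}{2}\lambda$ and why the inequality must be strict: in Theorem~\ref{thm: no harmonic function with finite energy} the nonstrict version suffices because harmonicity yields an additional $|\hessian u|$-type term, whereas here positivity must come entirely from the scalar curvature gap.
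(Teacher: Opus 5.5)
Your proof is correct and follows essentially the same route as the paper: the drifted Bochner identity $\Delta_V\bigl(\tfrac12|\nabla u|^2\bigr)=\tfrac{\lambda}{2}|\nabla u|^2+|\hessian u|^2$ for $V$-harmonic $u$, integrated against $\varphi^2$ in the unweighted measure so that $\divergence V=\tfrac{n\lambda}{2}-\scalar$ produces the weight $\scalar-\tfrac{n-2}{2}\lambda$, followed by absorbing the Hessian term and letting the cutoff radius tend to infinity. The only cosmetic difference is the drift cross term. You bound it by $\tfrac{C}{R}\int_{A_R}|\nabla u|^2\sqrt{2E}$, whereas the paper uses $2\phi|V||\nabla\phi|\le \phi^2|V|^2+|\nabla\phi|^2$ together with the vanishing tail of $\int|\nabla u|^2E$; both arguments work.
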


Moreover,  we also obtain bounds on scalar curvature on Ricci solitons when scalar curvature is subharmonic with respect to the drifted Laplacian $\Delta_V$ (see \Cref{Cor: bounds on constant scalar curvature}). 
We see in \cite{MR2507581} that a constant scalar curvature is a very restrictive condition on gradient Ricci solitons.
We explore some of these results in 
Section \ref{main} for the nongradient case.
We also give examples of Cigar soliton and $\mbox{Sol}_3$ solitons  supporting  some of our results. 

Theorem 1 of \cite{naber2006geometryanalysisriccisolitons} implies that the measure $e^{-f} \derivative {\Vol}_g $ of a complete shrinking gradient Ricci soliton $(M^n,g,f,\lambda)$ with bounded Ricci curvature is finite. 
We extend this result to complete, nongradient shrinking Ricci solitons, by using similar techniques as in this paper. In fact, we show that:  
\begin{restatable}{theorem}{shrinkingRShasfiniteEVolume}
	\label{thm: bounded Ricci implies finite E volume}
	Let $(M^n,g,V,\lambda)$ be a complete shrinking Ricci soliton with bounded Ricci curvature. 
	Then the measure $e^{-E} \derivative {\Vol}_g$ is finite. Consequently,  $M$ has finite fundamental group.
\end{restatable}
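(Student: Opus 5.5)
The plan is to show that $E$ grows at least quadratically in the distance from a fixed point, and that the volume of $(M,g)$ grows at most exponentially. These two facts together force $\int_M e^{-E}\,\derivative{\Vol}_g<\infty$.

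Fix $p\in M$, let $r=d(p,\cdot)$, and let $\gamma:[0,r]\to M$ be a unit speed minimizing geodesic from $p$ to $x$. Since $\nabla_{\gamma'}\gamma'=0$, the soliton equation gives
$$\frac{d}{dt}\, g(V,\gamma') = g(\nabla_{\gamma'}V,\gamma') = \tfrac12(\lie{V}g)(\gamma',\gamma') = \frac{\lambda}{2}-\ricci(\gamma',\gamma').$$
The skew part of $\nabla V$ drops out here, which is why the gradient structure is not needed. Integrating from $0$ to $r$ yields
$$g(V,\gamma'(r)) = g(V,\gamma'(0)) + \frac{\lambda r}{2} - \int_0^r \ricci(\gamma',\gamma')\,dt.$$
The key step is a bound $\int_0^r\ricci(\gamma',\gamma')\,dt\le C$ with $C$ independent of $r\ge 2$; this is the Hamilton/Perelman second-variation estimate. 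Let $\phi$ be piecewise linear, equal to $t$ on $[0,1]$, to $1$ on $[1,r-1]$, and to $r-t$ on $[r-1,r]$. Apply the second variation of length along the minimizing $\gamma$ with variation fields $\phi E_i$, where $E_i$ is a parallel orthonormal frame normal to $\gamma'$. Summing over $i$ gives
$$\int_0^r\phi^2\ricci(\gamma',\gamma')\,dt\le (n-1)\int_0^r(\phi')^2\,dt = 2(n-1).$$
On the two unit intervals where $\phi<1$, we use $\abs{\ricci}\le K$. This gives $\int_0^r\ricci(\gamma',\gamma')\,dt\le 2(n-1)+2K$.

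Consequently $\abs{V}(x)\ge \frac{\lambda}{2}r(x)-C'$, where $C'$ also absorbs $\sup_{B_1(p)}\abs{V}$. Since $\lambda>0$, this gives $E(x)\ge \frac{\lambda^2}{16}r(x)^2$ outside a large ball. On the other hand, $\ricci\ge -(n-1)K'$, so Bishop--Gromov comparison gives $\Vol B_p(R)\le C e^{(n-1)\sqrt{K'}R}$. Splitting $M$ into the annuli $B_p(k+1)\setminus B_p(k)$, we obtain
$$\int_M e^{-E}\,\derivative{\Vol}_g\le C+\sum_k C e^{(n-1)\sqrt{K'}(k+1)}e^{-\lambda^2k^2/16}<\infty.$$

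For the fundamental group, pass to the universal cover $\pi:\tilde M\to M$ with the pulled back metric $\tilde g$ and lifted field $\tilde V$. Then $(\tilde M,\tilde g,\tilde V,\lambda)$ is again a complete shrinking Ricci soliton with bounded Ricci curvature, and its energy function is $\tilde E=E\circ\pi$. The argument above therefore applies on $\tilde M$ and gives $\int_{\tilde M}e^{-\tilde E}\,\derivative{\Vol}_{\tilde g}<\infty$. But this integral equals $\abs{\pi_1(M)}\int_M e^{-E}\,\derivative{\Vol}_g$, and the latter integral is positive. Hence $\pi_1(M)$ is finite.

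I expect the main obstacle to be the uniform bound on $\int_0^r\ricci(\gamma',\gamma')\,dt$. It must be independent of $r$, because a naive bound $Kr$ would destroy the linear growth of $\abs{V}$ unless $\lambda>2K$. Everything else is routine comparison geometry.
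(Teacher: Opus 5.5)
Your proposal is correct and follows the paper's argument. The paper uses the same second-variation estimate along geodesics from $p$, with the same piecewise linear cutoff $\phi$, to get linear growth of $\abs{V}$ and hence quadratic growth of $E$. It then plays this against the exponential volume growth coming from $\ricci \geq -C$, and finishes with the same universal-cover identity to bound $\abs{\pi_1(M)}$. The only differences are cosmetic: you sum over annuli using Bishop--Gromov where the paper integrates in exponential coordinates, and you explicitly restrict to minimizing geodesics, which the paper's lemma tacitly requires.
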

The finiteness of a weighted volume for a generic shrinking Ricci soliton is a new result
whereas the finiteness of the fundamental group of a complete shriking Ricci soliton was proven by Wylie in \cite{MR2373611} without any bound on Ricci curvature through a different approach. 
As a particular case of the above theorem, we partially recover the aforementioned result of \cite{naber2006geometryanalysisriccisolitons} for gradient Ricci solitons. 
See \Cref{cor: shrinking GRS has finite f volume} for details.

The paper is divided into 6 sections. 
The Section \ref{prelim}  is devoted to Preliminaries, Section \ref{der} explores  general computations on  Ricci solitons, while the Section \ref{main} investigates interplay between the scalar curvature, energy function and soliton vector field while the Section \ref{liou} investigates the $L^1$-Liouville type theorems on Ricci solitons.
The final section namely, \Cref{finite-volume} generalizes a particular result of Aaron Naber \cite{naber2006geometryanalysisriccisolitons} to nongradient shrinking Ricci solitons.

\section{Preliminaries}\label{prelim}
In  this section,  we describe some of the basics  on any Riemannian manifold $(M^n,g)$  pertaining to Ricci solitons. The concepts and basic formulae  can be found  in \cite{MR3469435}  and  \cite{MR284950}.

In the sequel, the following Bochner's formula will be used. 
\begin{equation}\label{eqn: Bochner formula}
    \Delta\left( \frac{1}{2}\left| X \right|^2 \right) = g( \Delta X, X) + \left| \nabla X \right|^2 ,
\end{equation}
where $\Delta$ on the right hand side indicates the connection Laplacian and is defined as 
\begin{equation*}
    \Delta X =  \trace ( \nabla^2 X ). 
\end{equation*}
The gradient version of the Bochner formula is 
\begin{equation}\label{eqn: Bochner formula for gradient}
        \Delta \brackets{\frac{1}{2} \abs{\nabla u}^2} = \ricci \brackets{\nabla u, \nabla u} 
            + g \brackets{ \nabla u , \nabla \brackets{\Delta u} } + \abs{ \hessian u}^2. 
\end{equation}

The Lie derivative of the connection with respect to a vector field $V$  on $M$ is defined as follows:
\begin{equation*}
    (\lie{V}\nabla)(X,Y) = \lie{V}(\nabla_X Y) - \nabla_{\lie{V}X} Y - \nabla_X(\lie{V}Y) .
\end{equation*}
We define the curvature tensor as 
$$ \curvature (X,Y) Z = \nabla^2_{X,Y} Z - \nabla^2_{Y,X} Z 
	= \nabla_X \nabla_Y Z - \nabla_Y \nabla_X Z - \nabla_{[X,Y]} Z , $$
and its Lie derivative with respect to $V$ as
$$ (\lie{V} \curvature) (X,Y)Z = \lie{V} ( \curvature(X,Y)Z ) 
	- \curvature (\lie{V} X,Y)Z -\curvature (X, \lie{V}Y)Z - \curvature (X, Y) \lie{V}Z.  $$

The following formulas can be found in \cite{MR284950}, we present their proofs here for the sake of convenience.
\begin{proposition}[Yano's formulae]\label{Yano}
 For any vector fields $X, Y, Z$ on  a Riemannian manifold $(M^n,g)$ we have, 
 \begin{gather}
        \label{eqn: lie V of nabla}
            \left( \lie{V} \nabla \right) (X,Y) = \nabla^2_{X,Y} V + \curvature(V,X)Y, \\
        \label{eqn: Nabla X of L V g}
            \left( \nabla_X \left( \lie{V}g \right) \right) (Y,Z) 
                = g( \nabla^2_{X,Y} V, Z ) + g( Y, \nabla^2_{X,Z} V ),\\
        \label{eqn: Yano commutation formula}
            \left( \nabla_X \left( \lie{V}g \right) \right) (Y,Z) = 
                g \left( \left( \lie{V} \nabla \right) (X,Y) , Z \right) + 
                g \left( \left( \lie{V} \nabla \right) (X,Z) , Y \right), \\
        \label{eqn: Lie V of curvature}
            (\lie{V}\curvature)(X,Y) Z = \left( \nabla_X \left( \lie{V}\nabla \right) \right) (Y,Z) -
        \left( \nabla_Y \left( \lie{V}\nabla \right) \right) (X,Z) .
    \end{gather}
\end{proposition}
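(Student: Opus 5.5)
Since every identity in the proposition is tensorial in $X,Y,Z$, I will verify each one pointwise for arbitrary vector fields $X,Y,Z$, using only that $\nabla$ is torsion free and metric and the definitions of $\lie{V}$, $\nabla^2$ and $\curvature$ fixed above. The order is \cref{eqn: lie V of nabla}, then \cref{eqn: Nabla X of L V g}, then combine these two to get \cref{eqn: Yano commutation formula}, and finally \cref{eqn: Lie V of curvature}.

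For \cref{eqn: lie V of nabla}, I would expand each Lie derivative as a bracket, $\lie{V} W = [V,W] = \nabla_V W - \nabla_W V$, applied to the three terms of $(\lie{V}\nabla)(X,Y)$. The terms $\nabla_V\nabla_X Y$, $\nabla_{\nabla_X Y}V$, $\nabla_{[V,X]}Y$ and $\nabla_X\nabla_V Y - \nabla_X\nabla_Y V$ regroup as
\[
\nabla_V\nabla_X Y - \nabla_X\nabla_V Y - \nabla_{[V,X]}Y \;+\; \nabla_X\nabla_Y V - \nabla_{\nabla_X Y}V ,
\]
which is $\curvature(V,X)Y + \nabla^2_{X,Y}V$. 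For \cref{eqn: Nabla X of L V g}, I write $(\lie{V}g)(Y,Z) = g(\nabla_Y V,Z) + g(Y,\nabla_Z V)$. Differentiating in $X$ and subtracting the terms with $\nabla_X Y$ and $\nabla_X Z$ leaves exactly the Hessian terms $g(\nabla^2_{X,Y}V,Z) + g(Y,\nabla^2_{X,Z}V)$, by metric compatibility.

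For \cref{eqn: Yano commutation formula}, I substitute \cref{eqn: lie V of nabla} into the right-hand side. The Hessian parts reproduce \cref{eqn: Nabla X of L V g}, and the leftover is $g(\curvature(V,X)Y,Z) + g(\curvature(V,X)Z,Y)$. This leftover vanishes by the skew-symmetry of $g(\curvature(\cdot,\cdot)\cdot,\cdot)$ in its last two slots.

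For \cref{eqn: Lie V of curvature}, I would use the general principle that $\lie{V}$ commutes with contractions and acts as a derivation, so $\lie{V}$ of $\nabla_X\nabla_Y Z$ produces $(\lie{V}\nabla)$ terms. Concretely, for any tensor one has
\[
\lie{V}(\nabla_X W) = (\lie{V}\nabla)(X,W) + \nabla_{\lie{V}X}W + \nabla_X\lie{V}W .
\]
I apply this twice to $\nabla_X\nabla_Y Z$, and once to $\nabla_{[X,Y]}Z$ together with $\lie{V}[X,Y] = [\lie{V}X,Y] + [X,\lie{V}Y]$ (Jacobi). After antisymmetrizing, the terms that are pure curvature of the Lie-differentiated arguments cancel against $\curvature(\lie{V}X,Y)Z$, $\curvature(X,\lie{V}Y)Z$ and $\curvature(X,Y)\lie{V}Z$. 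The remaining terms are
\[
\nabla_X\bigl((\lie{V}\nabla)(Y,Z)\bigr) + (\lie{V}\nabla)(X,\nabla_Y Z) - (\lie{V}\nabla)([X,Y],Z) - (X\leftrightarrow Y).
\]
Rewriting $[X,Y] = \nabla_X Y - \nabla_Y X$ assembles these into the covariant derivatives $(\nabla_X(\lie{V}\nabla))(Y,Z) - (\nabla_Y(\lie{V}\nabla))(X,Z)$. Here I use that $\lie{V}\nabla$ is a symmetric tensor, which follows from torsion-freeness.

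The only real obstacle is the bookkeeping in this last step, specifically keeping track of which terms cancel. To keep it clean, I would compute at a point with normal coordinates and coordinate fields $X,Y,Z$, so that the brackets and $\nabla_X Y$ vanish at that point. Alternatively, I could use the index form $(\lie{V}\nabla)^k_{ij} = \nabla_i\nabla_j V^k + R^k{}_{lij}V^l$ and the Ricci identity directly.
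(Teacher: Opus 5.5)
Your proposal is correct and follows the paper's argument. The first three identities are obtained exactly as in the paper: unwind the definitions, and for the third use the skew-symmetry of $g(\curvature(V,X)\cdot,\cdot)$ in its last two slots. For the curvature identity, the paper only cites Yano's eq.~5.16 or says it ``can be computed directly,'' and your computation through $\lie{V}(\nabla_X W)$ is that direct check.

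There is one bookkeeping slip in your leftover expression. The term $-(\lie{V}\nabla)([X,Y],Z)$ should appear only once; antisymmetrizing it in $X,Y$ as written would double it. Also, the symmetry of $\lie{V}\nabla$ is not needed: $(\nabla_X(\lie{V}\nabla))(Y,Z)-(\nabla_Y(\lie{V}\nabla))(X,Z)$ produces $-(\lie{V}\nabla)(\nabla_XY-\nabla_YX,Z)$ directly, by linearity in the first slot.
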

\begin{proof}
    The first two equations follow from unwinding the definition.
    \begin{align*}
        \left( \lie{V} \nabla \right) (X,Y) 
            &= \nabla_V \nabla_X Y - \nabla_{\nabla_X Y} V 
                - \nabla_{\nabla_V X - \nabla_X V} Y - \nabla_X (\nabla_V Y - \nabla_Y V), \\
            &= \nabla^2_{V,X} Y - \nabla^2_{X,V} Y + \nabla^2_{X,Y}V = \nabla^2_{X,Y} V + \curvature(V,X)Y .
    \end{align*}
    \begin{align*}
        \left( \nabla_X \left( \lie{V}g \right) \right) (Y,Z)
            &= X \brackets{ \lie{V}g (Y,Z) } - (\lie{V}g)(\nabla_X Y,Z) - (\lie{V}g)(Y, \nabla_X Z), \\
            &= X \brackets{ g(\nabla_Y V, Z) + g(Y, \nabla_Z V) } 
            \\ &\qquad - (\lie{V}g)(\nabla_X Y,Z) - (\lie{V}g)(Y, \nabla_X Z), \\
            &= g( \nabla^2_{X,Y} V ,Z ) + g( Y, \nabla^2_{X,Z} V ).
    \end{align*}
    The third equation follows from the first two and the symmetries of the curvature tensor.
    \begin{align*}
        g ( ( \lie{V} \nabla ) (X,Y) &, Z ) 
            + g \left( \left( \lie{V} \nabla \right) (X,Z) , Y \right) \\ 
                &= g(\nabla^2_{X,Y} V,Z) + g(\nabla^2_{X,Z} V,Y) 
                		+ \curvature(V,X,Y,Z) +\curvature(V,X,Z,Y), \\
                &= \left( \nabla_X \left( \lie{V}g \right) \right) (Y,Z).
    \end{align*}
    The last equation follows from \cite[eqn 5.16]{MR284950} or can be computed directly. 
\end{proof}

It is well known that for any function $f$ and vector field $X$, 
$$\divergence \hessian f (X) = \ricci (\nabla f, X) + X(\Delta f). $$
The following is the nongradient version of the above formula (see also Lemma 2.1 of \cite{MR2507581}).

\begin{proposition}\label{prop: Divergence of L V g}
 Any two vector fields $V, X$ on a Riemannian manifold $(M^n,g)$  
satisfy:
\begin{equation}\label{eqn: Divergence of Lie derivative}
        [ \divergence ( \lie{V}g ) ] X = g (\Delta V , X) + \ricci(V,X) + X (\divergence V).
    \end{equation}
\end{proposition}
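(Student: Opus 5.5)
We compute $\divergence(\lie{V}g)$ in a local orthonormal frame $\{e_i\}$ and contract the Yano formula \cref{eqn: Nabla X of L V g}. By definition,
\[
[\divergence(\lie{V}g)]X = \sum_i \bigl(\nabla_{e_i}(\lie{V}g)\bigr)(e_i, X).
\]
Taking $X=e_i$, $Y=e_i$ and $Z=X$ in \cref{eqn: Nabla X of L V g} and summing over $i$ gives
\[
[\divergence(\lie{V}g)]X = \sum_i g(\nabla^2_{e_i,e_i}V, X) + \sum_i g(e_i, \nabla^2_{e_i,X}V).
\]
The first sum is $g(\Delta V, X)$ by the definition $\Delta V = \trace(\nabla^2 V)$. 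It remains to identify the second sum with $\ricci(V,X) + X(\divergence V)$.

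For the second sum we commute the covariant derivatives using the Ricci identity:
\[
\nabla^2_{e_i,X}V = \nabla^2_{X,e_i}V + \curvature(e_i,X)V.
\]
Then
\[
\sum_i g(e_i, \nabla^2_{X,e_i}V) = \trace\bigl(Y\mapsto \nabla^2_{X,Y}V\bigr) = X(\divergence V),
\]
because covariant differentiation commutes with contraction and $\divergence V = \trace(\nabla V)$. The curvature term is
\[
\sum_i g\bigl(\curvature(e_i,X)V, e_i\bigr),
\]
and we rewrite it as $\ricci(X,V) = \ricci(V,X)$ using the pair symmetry of the curvature tensor and the paper's sign convention $\curvature(X,Y)Z = \nabla_X\nabla_Y Z - \nabla_Y\nabla_X Z - \nabla_{[X,Y]}Z$.

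The only delicate point is this last sign. With the stated convention, $\sum_i g(\curvature(e_i,X)V, e_i)$ equals $\ricci(X,V)$ under the usual definition $\ricci(Y,Z) = \sum_i g(\curvature(e_i,Y)Z, e_i)$. We will check it against the known gradient case: for $V=\nabla f$ we have $\lie{V}g = 2\hessian f$, and the formula must reduce to $2\divergence\hessian f(X) = 2\ricci(\nabla f,X) + 2X(\Delta f)$. Since $\Delta\nabla f = \nabla\Delta f + Q\nabla f$, the right-hand side gives $g(\Delta V, X) + \ricci(V,X) + X(\divergence V) = 2\ricci(\nabla f,X) + 2X(\Delta f)$, so the sign is consistent.

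Alternatively, one can contract \cref{eqn: Yano commutation formula} together with \cref{eqn: lie V of nabla}, which produces the same curvature term. We will use whichever route makes the sign bookkeeping cleanest. Both routes are tensorial, so working pointwise in a normal frame with $\nabla e_i = 0$ at the point is legitimate.
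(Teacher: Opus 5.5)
Your proposal is correct and is essentially the paper's proof: the paper also contracts \cref{eqn: Nabla X of L V g} in a frame parallel at the point, commutes $\nabla^2_{E_i,X}V = \nabla^2_{X,E_i}V + \curvature(E_i,X)V$, and identifies the two resulting traces as $X(\divergence V)$ and $\ricci(X,V)$. You do not need the gradient-case consistency check or the alternative route via \cref{eqn: Yano commutation formula}. The identity $\sum_i g(E_i,\curvature(E_i,X)V)=\ricci(X,V)$ follows directly from the stated curvature convention, and it is the same convention the paper uses elsewhere.
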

\begin{proof}
For a point $p\in M$, let $\{E_i\}$ be an orthonormal frame in a neighbourhood of 
point $p$ which is parallel at $p$.  At point $p \in M$ we get,
    \begin{equation*}
        X( \divergence V ) = X \left( \sum_{i=1}^n g(E_i, \nabla_{E_i} V) \right),
    \end{equation*}
    which implies
    \begin{equation}\label{eqn: derivative of divergence}
        X( \divergence V ) = \sum_{i=1}^n g( E_i, \nabla^2_{X,E_i} V ).
    \end{equation}
    Now, using \cref{eqn: Nabla X of L V g} at $p$ we obtain,
    \begin{align*}
        [ \divergence ( \lie{V}g ) ] X &= \left(\nabla_{E_i} ( \lie{V}g ) \right) (E_i, X), \\
            &= g( \nabla^2_{E_i,E_i} V , X ) + g( E_i, \nabla^2_{E_i,X} V ), \\
            &= g(\Delta V, X) + g( E_i, \curvature(E_i, X)V ) + g(E_i, \nabla^2_{X,E_i}V), \\
            &= g(\Delta V, X) + \ricci(X,V) + X(\divergence V),
    \end{align*}
    where the last equality uses \cref{eqn: derivative of divergence}.
\end{proof}

\vspace{0.1in}


\section{Basic Identities on Ricci solitons}\label{der}
This section focuses on extending Proposition $2.1$ of \cite{MR2448435} to the nongradient case.

\begin{proposition}
    For a Ricci soliton $(M^n,g,V,\lambda)$  the following holds:
    \begin{gather}
        \divergence V = \frac{n\lambda}{2} - \scalar \label{eqn: Divergence of V} , \\
        -2 \Delta V = 2 \Qricci V =  
            \lambda V - \nabla E - \nabla_V V \label{eqn: Q of V} , \\
        \abs{\nabla V }^2 - \Delta E = \ricci(V,V) = \lambda E - V E \label{eqn: ricci V V}.
    \end{gather}
\end{proposition}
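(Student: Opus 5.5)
The plan is to get all three identities directly from the soliton equation \cref{eqn: Ricci Soliton equation}, using \Cref{prop: Divergence of L V g} and the Bochner formula \cref{eqn: Bochner formula}. The only global input is the contracted second Bianchi identity $\divergence \ricci = \frac{1}{2}\derivative \scalar$.

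For \cref{eqn: Divergence of V}, I will take the trace of \cref{eqn: Ricci Soliton equation}. Since $\trace(\frac{1}{2}\lie{V}g)=\divergence V$ and $\trace \ricci = \scalar$, this gives $\divergence V + \scalar = \frac{n\lambda}{2}$.

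For \cref{eqn: Q of V}, I will first rewrite the soliton equation as $\lie{V}g = \lambda g - 2\ricci$. Taking the divergence and using the Bianchi identity gives
\begin{equation*}
[\divergence(\lie{V}g)]X = -2(\divergence \ricci)(X) = -X(\scalar).
\end{equation*}
Comparing with \cref{eqn: Divergence of Lie derivative} gives
\begin{equation*}
-X(\scalar) = g(\Delta V,X) + \ricci(V,X) + X(\divergence V).
\end{equation*}
By \cref{eqn: Divergence of V}, $X(\divergence V) = -X(\scalar)$, so the scalar-curvature terms cancel. This leaves $g(\Delta V,X) = -\ricci(V,X)$ for all $X$, that is, $\Delta V = -\Qricci V$.

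For the second equality in \cref{eqn: Q of V}, I will evaluate \cref{eqn: Ricci Soliton equation} on the pair $(X,V)$:
\begin{equation*}
g(\nabla_X V, V) + g(X,\nabla_V V) + 2\ricci(X,V) = \lambda g(X,V).
\end{equation*}
The first term is $X(E)$, since $E=\frac{1}{2}g(V,V)$. Therefore $2g(\Qricci V, X) = g(\lambda V - \nabla E - \nabla_V V, X)$ for every $X$, which is the claimed identity.

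For \cref{eqn: ricci V V}, I will apply the Bochner formula \cref{eqn: Bochner formula} to $V$:
\begin{equation*}
\Delta E = g(\Delta V,V) + \abs{\nabla V}^2 = -\ricci(V,V) + \abs{\nabla V}^2,
\end{equation*}
using $\Delta V = -\Qricci V$ from the previous step. This is the first equality.

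For the second equality in \cref{eqn: ricci V V}, I will evaluate the soliton equation on $(V,V)$. Here $\frac{1}{2}(\lie{V}g)(V,V) = g(\nabla_V V,V) = V(E)$, so $V(E) + \ricci(V,V) = \lambda E$. Equivalently, I can pair the second equality of \cref{eqn: Q of V} with $V$.

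There is no real obstacle here; the only point needing care is the sign bookkeeping in the divergence step. In particular, the $X(\scalar)$ terms must cancel exactly, and this is what forces $\Delta V = -\Qricci V$ with no gradient correction.
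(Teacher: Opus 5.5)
Your proposal is correct and follows essentially the same route as the paper. The paper likewise traces the soliton equation, takes its divergence and compares with \Cref{prop: Divergence of L V g} via the contracted Bianchi identity to get $\Delta V = -\Qricci V$, evaluates the soliton equation on $(X,V)$, and pairs with $V$ using the Bochner formula. Your evaluation of the soliton equation on $(V,V)$ to get $\ricci(V,V)=\lambda E - VE$ is just the pairing of \cref{eqn: Q of V} with $V$ written out directly.
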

\begin{proof}
    \Cref{eqn: Divergence of V} is obtained by contracting the Ricci soliton equation (\cref{eqn: Ricci Soliton equation}). 
    Taking divergence of \cref{eqn: Ricci Soliton equation}, using second contracted Bianchi identity, and comparing with \cref{eqn: Divergence of Lie derivative}, one obtains
    \begin{equation*}
        \Delta V  + \Qricci V  + \nabla (\divergence V) = - \nabla\scalar.
    \end{equation*}
    Substituting \cref{eqn: Divergence of V} above gives left half of \cref{eqn: Q of V}. 
    From \cref{eqn: Ricci Soliton equation}, we obtain
    \begin{equation*}
    \ricci (X,V) = \frac{\lambda}{2} g(X,V) - \frac{1}{2} (\lie{V}g)(X,V) 
        = \frac{\lambda}{2} g(X,V) - \left. \frac{1}{2} \middle( X E + g(X,\nabla_V V) \right) .
    \end{equation*}
    Thus, 
    \begin{equation*}
        2 \Qricci V =  \lambda V - \nabla E - \nabla_V V ,
    \end{equation*}
    which proves the right half of \cref{eqn: Q of V}. Taking inner product of \cref{eqn: Q of V} with $V$ and using \cref{eqn: Bochner formula} gives \cref{eqn: ricci V V}.
\end{proof}

\begin{lemma}\label{lem: Bochner formula w.r.t. Delta V}
    On a Ricci soliton $(M^n,g,V,\lambda)$, any smooth function $u$ satisfies the following Bochner formula
    \begin{equation}\label{eqn: Bochner formula w.r.t. Delta V}
        \Delta_V \brackets{\frac{1}{2} \abs{\nabla u}^2} = \frac{\lambda}{2} \abs{\nabla u}^2 
            + g\brackets{ \nabla u , \nabla ( \Delta_V u) } + \abs{\hessian u}^2 .
    \end{equation}
\end{lemma}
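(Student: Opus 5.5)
Starting from the gradient Bochner formula \cref{eqn: Bochner formula for gradient}, I will subtract the drift term $g(V, \nabla \tfrac{1}{2}\abs{\nabla u}^2)$ from both sides and then use the soliton equation \cref{eqn: Ricci Soliton equation} to combine $\ricci(\nabla u,\nabla u)$ with the leftover terms. The left-hand side then becomes
\[
\Delta_V \brackets{\tfrac{1}{2}\abs{\nabla u}^2} = \ricci(\nabla u,\nabla u) + g(\nabla u,\nabla(\Delta u)) + \abs{\hessian u}^2 - g\brackets{V,\nabla \tfrac{1}{2}\abs{\nabla u}^2}.
\]

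The next step is to rewrite $g(\nabla u, \nabla(\Delta u))$ in terms of $\Delta_V u$. Since $\Delta u = \Delta_V u + g(V,\nabla u)$,
\[
g(\nabla u,\nabla(\Delta u)) = g(\nabla u, \nabla(\Delta_V u)) + \nabla u\brackets{g(V,\nabla u)}.
\]
Differentiating the inner product gives
\[
\nabla u\brackets{g(V,\nabla u)} = g(\nabla_{\nabla u} V, \nabla u) + \hessian u(\nabla u, V).
\]
For the drift term,
\[
g\brackets{V, \nabla \tfrac{1}{2}\abs{\nabla u}^2} = \hessian u(V,\nabla u).
\]
By symmetry of the Hessian this cancels the $\hessian u(\nabla u, V)$ term exactly.

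What remains is $\ricci(\nabla u,\nabla u) + g(\nabla_{\nabla u}V,\nabla u)$. We have $g(\nabla_{X}V, X) = \tfrac{1}{2}(\lie{V}g)(X,X)$, since the skew part of $\nabla V$ drops out on the diagonal. Evaluating \cref{eqn: Ricci Soliton equation} on $(\nabla u,\nabla u)$ therefore gives
\[
\ricci(\nabla u,\nabla u) + \tfrac{1}{2}(\lie{V}g)(\nabla u,\nabla u) = \tfrac{\lambda}{2}\abs{\nabla u}^2.
\]
Substituting this yields \cref{eqn: Bochner formula w.r.t. Delta V}.

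The computation is routine. The one point needing care is the exact cancellation of the two Hessian terms, together with recognising that only the symmetric part of $\nabla V$ enters. That is precisely why no gradient assumption on $V$ is needed.
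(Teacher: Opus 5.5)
Your proof is correct and follows essentially the same route as the paper. The paper also splits $\Delta u = \Delta_V u + g(V,\nabla u)$ inside the gradient Bochner formula and identifies $g(\nabla_{\nabla u}V,\nabla u)$ with $\tfrac12(\lie{V}g)(\nabla u,\nabla u)$, so that the soliton equation absorbs $\ricci(\nabla u,\nabla u)$. It then cancels the leftover $\hessian u(V,\nabla u)$ against the drift term $V\brackets{\tfrac12\abs{\nabla u}^2}$, exactly as you do.
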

\begin{proof}
    By definition, $\Delta u = \Delta_V u + g(V, \nabla u) $.
    Thus,
    \begin{align*}
        g\brackets{ \nabla u , \nabla ( \Delta u) } 
            &= g\brackets{ \nabla u , \nabla ( \Delta_V u) }  
                + g\brackets{ \nabla_{\nabla u} V , \nabla u } + g (V, \nabla_{\nabla u} \nabla u) , \\
            &= g\brackets{ \nabla u , \nabla ( \Delta_V u) } 
                + \frac{1}{2} \lie{V}g (\nabla u, \nabla u) + \hessian u (V, \nabla u).
    \end{align*}
    Substituting in the Bochner formula (\cref{eqn: Bochner formula for gradient})
    and using the Ricci soliton equation, we  obtain
    \begin{equation*}
        \Delta \brackets{\frac{1}{2} \abs{\nabla u}^2} 
            = \lambda \brackets{ \frac{1}{2} \abs{\nabla u}^2 } + g\brackets{ \nabla u , \nabla ( \Delta_V u) } 
                + \hessian u (V, \nabla u) + \abs{ \hessian u}^2 .
    \end{equation*}
    Laslty, noting 
    $$ V \brackets{\frac{1}{2} \abs{\nabla u}^2} = g\brackets{ \nabla_V \nabla u,\nabla u } = \hessian u (V, \nabla u)  $$
    gives the desired equation.
\end{proof}

The following is a known result (see Lemma 2.1 of \cite{MR3063556}). It allows for an estimate of $V$-Laplacian of the scalar curvature. We share a different proof based on Yano's formulae derived in Proposition \ref{Yano}
\begin{restatable}{theorem}{LaplacianScalarRicci}
	\label{prop: laplacian Scalar in norm ricci}
    For a Ricci soliton $(M^n,g,V,\lambda)$, the following holds:
    \begin{gather}
        \Delta\scalar - V\scalar - \lambda\scalar = - 2\abs{\Qricci}^2,   \label{eqn: laplacian Scalar in norm ricci} \\
        \Delta\scalar - V\scalar - \frac{2\scalar}{n} \left( \divergence V \right) 
            = -2 \abs{\mathring{\Qricci}}^2 .\label{eqn: laplacian Scalar in traceless ricci}
    \end{gather}
\end{restatable}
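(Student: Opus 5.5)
The plan is to compute the Lie derivative of the scalar curvature along $V$ in two ways. On one hand $\lie{V}\scalar = V\scalar$. On the other hand $\lie{V}\scalar$ is the first variation of $\scalar$ in the direction $h := \lie{V} g$. By \cref{eqn: Ricci Soliton equation}, $h = \lambda g - 2\ricci$. We would then compare the two expressions.

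\textbf{Step 1: $\lie{V}\nabla$ in terms of $h$.} Permute $X,Y,Z$ cyclically in \cref{eqn: Yano commutation formula} and use the symmetry $(\lie{V}\nabla)(X,Y) = (\lie{V}\nabla)(Y,X)$. This symmetry follows from \cref{eqn: lie V of nabla}, since $\nabla^2_{X,Y}V - \nabla^2_{Y,X}V = \curvature(X,Y)V$, together with the Bianchi identity. The permutation gives the Koszul-type formula
$$ 2g\brackets{(\lie{V}\nabla)(X,Y),Z} = (\nabla_X h)(Y,Z) + (\nabla_Y h)(X,Z) - (\nabla_Z h)(X,Y). $$

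\textbf{Step 2: contract to get $\lie{V}\ricci$, then trace.} Contract \cref{eqn: Lie V of curvature} over $X$ and the output slot. Since $\lie{V}$ commutes with contraction, this yields
$$ (\lie{V}\ricci)(Y,Z) = \trace\brackets{\nabla(\lie{V}\nabla)}(Y,Z) - \nabla_Y \brackets{\trace \lie{V}\nabla}(Z). $$
Inserting Step 1 gives the standard linearization
$$ 2\lie{V}\ricci = -\Delta h - \hessian(\trace h) + \nabla\divergence h + (\nabla\divergence h)^T + \text{(curvature terms)}. $$
We then trace with $g$, using $\lie{V}\scalar = \trace_g \lie{V}\ricci - g(h,\ricci)$. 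The curvature terms cancel upon tracing, which leaves
$$ V\scalar = -\Delta(\trace h) + \divergence\divergence h - g(h,\ricci). $$
This index bookkeeping is the main obstacle of the proof. It is purely routine, but one has to be careful with the signs coming from the convention for $\curvature$.

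\textbf{Step 3: substitute the soliton data.} We have $\trace h = n\lambda - 2\scalar$. By the contracted second Bianchi identity $\divergence\ricci = \tfrac12 \derivative\scalar$, so $\divergence\divergence h = -2\divergence\divergence\ricci = -\Delta\scalar$. Also $g(h,\ricci) = \lambda\scalar - 2\abs{\Qricci}^2$. Substituting these,
$$ V\scalar = 2\Delta\scalar - \Delta\scalar - \lambda\scalar + 2\abs{\Qricci}^2 , $$
which is exactly \cref{eqn: laplacian Scalar in norm ricci}.

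\textbf{Step 4: the traceless form.} For \cref{eqn: laplacian Scalar in traceless ricci}, write $\abs{\Qricci}^2 = \abs{\mathring{\Qricci}}^2 + \scalar^2/n$. By \cref{eqn: Divergence of V},
$$ \frac{2\scalar}{n}\divergence V = \lambda\scalar - \frac{2\scalar^2}{n} . $$
Hence
$$ \lambda\scalar - 2\abs{\Qricci}^2 = \frac{2\scalar}{n}\divergence V - 2\abs{\mathring{\Qricci}}^2 , $$
and substituting this into \cref{eqn: laplacian Scalar in norm ricci} gives \cref{eqn: laplacian Scalar in traceless ricci}.
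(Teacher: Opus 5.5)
Your proposal is correct and follows essentially the paper's route. Both arguments use the Koszul-type formula for $\lie{V}\nabla$ obtained from Yano's commutation formula, contract the formula for $\lie{V}\curvature$ to get $\lie{V}\ricci$, trace using $\trace_g \lie{V}\ricci = V\scalar + g(\lie{V}g,\ricci)$, and then apply the soliton equation with the contracted Bianchi identity, deriving the traceless form from $\divergence V = \frac{n\lambda}{2} - \scalar$.

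The only difference is bookkeeping. The paper inserts $\nabla(\lie{V}g) = -2\nabla\ricci$ at the outset and reduces $\lie{V}\ricci$ to a Lichnerowicz-type expression before tracing. You instead keep $h$ general, obtain the standard first variation formula $-\Delta(\trace h) + \divergence\divergence h - g(h,\ricci)$ (where the curvature terms cancel on tracing), and substitute $h=\lambda g-2\ricci$ only at the end, which makes the trace step cleaner than in the paper.
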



\begin{proof}
The second equation follows from the first by noting $\abs{\mathring{\Qricci}}^2 = \abs{\Qricci}^2 - \frac{\scalar^2}{n} $ and \cref{eqn: Divergence of V}.

Differentiating \cref{eqn: Ricci Soliton equation} along an arbitrary vector field, we obtain 
\begin{equation*}
    \nabla_X \left( \lie{V} g \right) = - 2 \nabla_X \ricci.
\end{equation*}

Taking cyclic sum of \cref{eqn: Yano commutation formula} (with subtraction on the last one) and using the above gives
\begin{align*}
     2 & g\left( \left( \lie{V}\nabla  \right)  (X,Y), Z \right) \\
        &= \left( \nabla_X \left( \lie{V} g \right) \right) (Y,Z) + \left( \nabla_Y \left( \lie{V} g \right) \right) (Z,X)
            - \left( \nabla_Z \left( \lie{V} g \right) \right) (X,Y) \\
        &= -2 \left( \left( \nabla_X \ricci \right) (Y,Z) + \left( \nabla_Y \ricci \right) (Z,X) - \left( \nabla_Z \ricci \right) (X,Y) \right).
\end{align*}

For simplicity of computation we define a $(1,1)$ tensor $T$ by
\begin{equation*}
    g ( T(X,Y) , Z ) = (\nabla_Z \ricci ) (X,Y).
\end{equation*}
Also for any $(1,2)$ tensor $\mathcal{U}$, we denote its symmetrization by
\begin{equation*}  
 (\mathcal{U} \circ \mathrm{sym}  ) (X,Y) = \mathcal{U}(X,Y) + \mathcal{U}(Y,X).
\end{equation*} 
Then we can write
\begin{equation*}
	\left( \lie{V}\nabla \right) (X,Y) = - \left( (\nabla_X \Qricci ) Y + (\nabla_Y \Qricci) X - T (X,Y) \right).
\end{equation*}
Or simply
\begin{equation*}
	\lie{V}\nabla = - \left( \nabla\Qricci \circ \mathrm{sym} - T \right).
\end{equation*}
Therefore,
\begin{equation*}
    \nabla_X \left( \lie{V}\nabla \right) 
    		= - \left( \nabla_X \nabla\Qricci \circ \mathrm{sym} - \nabla_X T \right) .
\end{equation*}
Now using the above in \cref{eqn: Lie V of curvature},
\begin{multline*}
    (\lie{V}\curvature) (X,Y) Z = 
        - \left( \left( \nabla^2_{X,Y}\Qricci \right) Z 
        + \left( \nabla^2_{X,Z}\Qricci \right) Y - \nabla_X T (Y,Z)  \right) \\
        + \left( \left( \nabla^2_{Y,X}\Qricci \right) Z 
        + \left( \nabla^2_{Y,Z}\Qricci \right) X - \nabla_Y T (X,Z)   \right). 
\end{multline*}

Let $\{E_i\}$ be an orthonormal frame in the neighbourhood of a point point $p\in M$ which is parallel at $p$. 
Noting,
\begin{equation*}
    g( (\nabla_X T)(Y,Z) , W ) = (\nabla^2_{X,W} \ricci) (Y,Z),
\end{equation*}
and contracting the above equation w.r.t. $X$, we obtain
\begin{multline}\label{eqn: Lie V of Ricci}
	(\lie{V}\ricci)(Y,Z) = \left( -\nabla^2_{E_i, Y}  \ricci \right) (Z, E_i) - \left( \nabla^2_{E_i, Z} \ricci \right) (Y , E_i) + \left( \nabla^2_{E_i, E_i}  \ricci \right) ( Y, Z)  \\
		+ \left( \nabla^2_{Y, E_i} \ricci\right)  (Z, E_i) + \left( \nabla^2_{Y, Z} \ricci \right) (E_i, E_i) - \left( \nabla^2_{Y, E_i} \ricci \right) (E_i, Z)  .
\end{multline}

The contracted Bianchi identity implies,
\begin{gather*}
    (\nabla_{E_i} \ricci) (E_i , Z ) = \frac{1}{2} \mathrm{d} \scalar (Z), \\
    (\nabla_Y \nabla_{E_i} \ricci) (E_i, Z) + (\nabla_{E_i} \ricci) (E_i, \nabla_Y Z)
        = \frac{1}{2} ( \nabla_Y \mathrm{d}\scalar )(Z) + \frac{1}{2} \mathrm{d}\scalar (\nabla_Y Z), \\
    (\nabla^2_{Y,E_i} \ricci) (E_i, Z) = \frac{1}{2} \hessian \scalar (Y,Z).
\end{gather*}
Furthermore,
\begin{align*}
    (\nabla^2_{E_i,Y} \ricci) (E_i,Z) &= (\curvature(E_i, Y)\ricci)(E_i, Z) + (\nabla^2_{Y,E_i} \ricci) (E_i, Z), \\
        &= (\curvature(E_i, Y)\ricci)(E_i, Z) + \frac{1}{2}\hessian\scalar (Y, Z).
\end{align*}
Substituting in \cref{eqn: Lie V of Ricci},
\begin{multline}\label{ric}
    (\lie{V}\ricci)(Y,Z) = 
            -(\curvature(E_i,Y)\ricci )(E_i,Z) - \frac{1}{2}\hessian\scalar(Y,Z)  \\
            -(\curvature(E_i,Z)\ricci )(E_i,Y) - \frac{1}{2}\hessian\scalar(Z,Y) \\
            + ({\Delta} \ricci)(Y,Z) + \frac{1}{2} \hessian\scalar(Y,Z) + \hessian\scalar(Y,Z) - \frac{1}{2} \hessian\scalar(Y,Z).
\end{multline}
Noting
\begin{align} 
    (\curvature(E_i,Y)\ricci )(E_i,Z) &= - \ricci ( \curvature(E_i,Y) E_i, Z ) - \ricci ( E_i, \curvature(E_i,Y)Z ), \\\label{Q}
        &= g( \Qricci Y, \Qricci Z ) - \curvature( E_i, Y, Z, \Qricci E_i ). 
\end{align}
 Using eq.(\ref{Q}) in eq. (\ref{ric})
 and simplifying we obtain,
\begin{multline}
    (\lie{V}\ricci)(Y,Z) = 
            -g( \Qricci Y, \Qricci Z ) + \curvature(E_i, Y,Z, \Qricci E_i) \\
            -g( \Qricci Z, \Qricci Y ) + \curvature(E_i, Z,Y, \Qricci E_i)
            +({\Delta} \ricci)(Y,Z) .
\end{multline}
Tracing the above equation,
\begin{equation*}
    \lie{V}g (\Qricci E_i, E_i) + 2 V \scalar = \Delta \scalar.
\end{equation*}
Now \cref{eqn: Ricci Soliton equation} gives,
\begin{equation*}
    2(\lie{V}g)(E_i, \Qricci E_i) = 2\lambda \scalar -4 \abs{\Qricci}^2.
\end{equation*}
Therefore,
\begin{equation*}
    \lambda\scalar - 2\abs{\Qricci}^2 + V\scalar = \Delta\scalar.
\end{equation*}

\end{proof}

The rest of this section is dedicated to proving \Cref{cor: S + 2E - lambda f for nongradient case}.
It will involve various auxiliary identities involving the scalar curvature and the energy function.
We begin by establishing the following lemma.

\begin{lemma}\label{lem: divergence of TX}
Let $(M^n,g)$ be a  Riemannian manifold. Then  for  any symmetric $(1,1)$-tensor $T$ on $M$, and any vector field $X$ on $M$,
    $$ \divergence (TX) = (\divergence T) X + \frac{1}{2}g\left(\mathring{T},\lie{X}g\right) + \frac{\trace T}{n} \divergence X .$$
\end{lemma}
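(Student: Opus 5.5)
The plan is a pointwise computation in a local orthonormal frame $\{E_i\}$ that is parallel at a given point $p$. At $p$ the product rule gives
$$ \divergence (TX) = \sum_i g\brackets{ \nabla_{E_i}(TX), E_i } = \sum_i g\brackets{ (\nabla_{E_i} T) X, E_i } + \sum_i g\brackets{ T(\nabla_{E_i} X), E_i } . $$
I will treat the two sums separately.

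For the first sum, I use that $T$ is symmetric, so each $\nabla_{E_i} T$ is symmetric as well. Hence
$$ g\brackets{ (\nabla_{E_i} T)X, E_i } = g\brackets{ X, (\nabla_{E_i} T) E_i } . $$
Summing over $i$ gives $g(\divergence T, X)$, which is what the statement denotes $(\divergence T)X$, with $\divergence T$ viewed as a $1$-form.

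For the second sum, I again use the symmetry of $T$ to rewrite it as $\sum_i g(\nabla_{E_i} X, T E_i)$. Setting $A = \nabla X$ as a $(1,1)$-tensor, this is the full contraction $\langle A, T\rangle$. Since $T$ is symmetric, only the symmetric part of $A$ contributes. That symmetric part is $\frac{1}{2}\lie{X}g$, because $(\lie{X}g)(Y,Z) = g(\nabla_Y X, Z) + g(Y, \nabla_Z X)$. This yields
$$ \sum_i g\brackets{ T(\nabla_{E_i} X), E_i } = \frac12 g\brackets{ T, \lie{X}g } . $$

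Finally I split $T = \mathring{T} + \frac{\trace T}{n}\, I$. The trace part contributes
$$ \frac{\trace T}{2n}\, \trace(\lie{X} g) = \frac{\trace T}{n}\, \divergence X , $$
since $\trace \lie{X}g = 2\divergence X$. The traceless part gives exactly $\frac12 g(\mathring{T}, \lie{X}g)$, and adding the three pieces completes the proof.

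There is no real obstacle here. The only points needing care are the convention for $(\divergence T)X$ and the normalization of the inner product $g(\cdot,\cdot)$ on tensors, namely the full contraction with no factor of $\frac12$. With those settled, the constants come out as stated.
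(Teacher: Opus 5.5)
Your proposal is correct and follows essentially the same route as the paper. Both arguments apply the product rule to $\nabla(TX)$ and contract over an orthonormal frame, use the symmetry of $T$ to identify $\sum_i g(T\nabla_{E_i}X, E_i)$ with $\frac{1}{2}g(T,\mathcal{L}_X g)$, and then split off the trace part of $T$ using $\trace \mathcal{L}_X g = 2\divergence X$.
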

\begin{proof}
	Let $(E_i)$ be an orthonormal frame in some neighbourhood of a point $p$.
	Then  
	\begin{align*}
		g (T , \lie{X} g ) &= g (T (E_i) , E_j)~ (\lie{X} g )(E_i, E_j) \\
		& = T_{i}^j \left( g ( \nabla_{E_i} X , E_j) + g ( E_i, \nabla_{E_j} X \right)) \\	
		&  = 2 g ( T E_i , \nabla_{E_i} X ) =  2 g ( E_i , T( \nabla_{E_i} X ) ) .
	\end{align*}
	Now  for any vector field $Y$, 
	$$ \nabla_Y ( T X ) = (\nabla_Y T) X + T (\nabla_Y X).  $$
	Contracting the above equation over $Y$,
	$$ g(\nabla_{E_i}(TX) , E_i) = g \left( (\nabla_{E_i} T ) X , E_i) + g ( T (\nabla_{E_i} X), E_i )\right). $$
	Hence,
	\begin{equation*}
		\divergence (TX) = (\divergence T ) X + \frac{1}{2} g ( T , \lie{X} g ).
	\end{equation*}
	Substituting $ \mathring{T} = T - \frac{\trace T }{n} g$ produces the result.
\end{proof}

\begin{remark}
For a closed manifold, integrating the above equation  implies  Theorem 4.2 of \cite{MR4393949}.  
\end{remark}

\begin{proposition}\label{lapE}
    For any Ricci soliton  $(M^n,g, V, \lambda)$,      
   \begin{equation}\label{eqn: V scalar norm of traceless Ricci} 
        \left. \frac{1}{2} \middle( V\scalar + \divergence (\nabla E) + \divergence (\nabla_V V)  \right) 
            = \frac{1}{n} (\divergence V)^2 + \abs{\mathring{Q}}^2 .
    \end{equation}
\end{proposition}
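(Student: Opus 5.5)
The plan is to take the divergence of the right half of \cref{eqn: Q of V} and evaluate the term $\divergence(\Qricci V)$ with \Cref{lem: divergence of TX}. Rearranging \cref{eqn: Q of V} gives
$$ \nabla E + \nabla_V V = \lambda V - 2\Qricci V , $$
and taking divergence of both sides gives
$$ \divergence(\nabla E) + \divergence(\nabla_V V) = \lambda\, \divergence V - 2\,\divergence(\Qricci V). $$
So everything reduces to computing $\divergence(\Qricci V)$.

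For this I apply \Cref{lem: divergence of TX} with $T=\Qricci$ (symmetric, with $\trace \Qricci = \scalar$) and $X=V$:
$$ \divergence(\Qricci V) = (\divergence \Qricci)V + \frac{1}{2} g\brackets{\mathring{\Qricci}, \lie{V}g} + \frac{\scalar}{n}\divergence V . $$
I then evaluate the first two terms on the right.
\begin{itemize}
\item By the contracted second Bianchi identity, $\divergence \Qricci = \frac12 \derivative \scalar$, so $(\divergence\Qricci)V = \frac12 V\scalar$.
\item By the soliton equation \cref{eqn: Ricci Soliton equation}, $\lie{V}g = \lambda g - 2\ricci$. Since $\mathring{\Qricci}$ is traceless, $g(\mathring{\Qricci}, g)=0$. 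Also $g(\mathring{\Qricci},\ricci) = \abs{\mathring{\Qricci}}^2$, because $\ricci$ differs from $\mathring{\Qricci}$ by a multiple of $g$. Hence $\frac12 g(\mathring{\Qricci},\lie{V}g) = -\abs{\mathring{\Qricci}}^2$.
\end{itemize}
Together these give
$$ \divergence(\Qricci V) = \frac12 V\scalar - \abs{\mathring{\Qricci}}^2 + \frac{\scalar}{n}\divergence V. $$

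Substituting back into the divergence identity, the two $V\scalar$ terms cancel, leaving
$$ \frac12\brackets{V\scalar + \divergence(\nabla E) + \divergence(\nabla_V V)} = \frac12\brackets{\lambda - \frac{2\scalar}{n}}\divergence V + \abs{\mathring{\Qricci}}^2 . $$
Finally, \cref{eqn: Divergence of V} gives $\lambda - \frac{2\scalar}{n} = \frac{2}{n}\brackets{\frac{n\lambda}{2}-\scalar} = \frac{2}{n}\divergence V$. This turns the first term on the right into $\frac1n(\divergence V)^2$, which is exactly \cref{eqn: V scalar norm of traceless Ricci}.

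No step is a real obstacle. The only points needing care are the sign conventions when inserting $\lie{V}g=\lambda g-2\ricci$, and the pairing identity $g(\mathring{\Qricci},\ricci)=\abs{\mathring{\Qricci}}^2$.
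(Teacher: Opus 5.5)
Your proposal is correct and follows the paper's proof essentially step for step. The paper also applies \Cref{lem: divergence of TX} with $T=\Qricci$, $X=V$ to get $\divergence(\Qricci V) = \frac12 V\scalar - \abs{\mathring{\Qricci}}^2 + \frac{\scalar}{n}\divergence V$, compares this with the divergence of \cref{eqn: Q of V}, and finishes with \cref{eqn: Divergence of V}. Your checks on $\lie{V}g = \lambda g - 2\ricci$ and on $g(\mathring{\Qricci},\ricci)=\abs{\mathring{\Qricci}}^2$ are exactly the delicate points, and you handle both correctly.
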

\begin{proof}
	Taking $T=\Qricci$ and $X=V$ in \Cref{lem: divergence of TX}, we have
    $$ \divergence (\Qricci V) = (\divergence \Qricci) V + \frac{1}{2}g\left(\mathring{\Qricci},\lie{V}g\right) + \frac{\trace T}{n} \divergence V . $$
    Now availing \cref{eqn: Ricci Soliton equation} and second contracted Bianchi identity viz., 
  $ 2  \divergence \ricci = \derivative \scalar $, we obtain
    $$ \divergence ( \Qricci V ) = \frac{1}{2} V\scalar - \abs{\mathring{Q}}^2 + \frac{\scalar}{n} \divergence V .  $$
    Comparing the above with \cref{eqn: Q of V},
    \begin{equation*}
        \frac{1}{2} V\scalar - \abs{\mathring{Q}}^2 + \frac{\scalar}{n} \divergence V 
        		= \divergence (\Qricci V) 
            = \left. \frac{1}{2} \middle( \lambda \divergence V   - \Delta E - \divergence (\nabla_V V)\right). 
    \end{equation*}
    Or simply
    \begin{equation}\label{div E}
        \left. \frac{1}{2} \middle( V\scalar + \divergence (\nabla E) + \divergence (\nabla_V V)  \right) 
            = \abs{\mathring{Q}}^2 - \frac{\scalar}{n} (\divergence V) + \frac{\lambda}{2} \divergence V .
    \end{equation}
    Lastly \cref{eqn: Divergence of V} implies
    \begin{equation*}
        \left. \frac{1}{2} \middle( V\scalar + \divergence (\nabla E) + \divergence (\nabla_V V)  \right) 
            = \frac{1}{n} (\divergence V)^2 + \abs{\mathring{Q}}^2 .
    \end{equation*}
\end{proof}

\begin{remark}
    For any noncompact Ricci soliton with constant scalar curvature, the above corollary implies the existence of a vector field $W = \nabla E + \nabla_V V $ which has a nonnegative divergence, and $W$ is divergence-free iff the  soliton is trivial (Einstein).
\end{remark}

\scalarEnergyNongradient*
\begin{proof}
From \cref{eqn: laplacian Scalar in traceless ricci} and \cref{div E}
\begin{equation*}
	\Delta \scalar 
		= V\scalar -2 \abs{\mathring{\Qricci}}^2 + \frac{2\scalar}{n} (\divergence V) 
		= \lambda \divergence V -\divergence (\nabla E + \nabla_V V).
\end{equation*}

\end{proof}
Thus we  affirm  \Cref{cor: S + 2E - lambda f for nongradient case} as stated in the introduction.


\section{Interplay between the scalar curvature, energy function and soliton vector field}\label{main}
In this section, we establish 
\Cref{thm: Upper bound on E} as stated in the introduction.   
To prove \Cref{thm: Upper bound on E}, we will use the Omori-Yau maximum principle,
\Cref{thm:Cheng Yau maximum principle}, and 
\Cref{lem: Subharmonic with finite energy is harmonic}.
In the later part of the section, motivated by the results of \cite{MR2507581}, we explore Ricci solitons of constant scalar curvature. In particular, we obtain  
bounds on scalar curvature and the energy function. 

First we wish to showcase a result from \cite{MR3721584} which shows the importance of the energy function. 
Rewriting \cref{eqn: S + 2E - lambda f} as 
$$ \lambda \nabla f = \nabla ( \scalar + 2E ), $$
we see that for a given nonsteady Ricci soliton $(M^n,g,V,\lambda)$, up to a constant, the best candidate for a soliton function is $\frac{1}{\lambda} (\scalar + 2E)$.
Sharma et. al. showed that this is indeed the case when $\nabla V$ is symmetric. 

\begin{restatable}{theorem}{SymmetricImpliesGradient}
	\label{thm: nabla V symmetric implies gradient}
    For a non-steady Ricci soliton $(M^n,g,V,\lambda)$, if $\nabla V$ is symmetric, then the soliton is gradient.
\end{restatable}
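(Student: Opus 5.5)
If $\nabla V$ is symmetric, then the dual $1$-form $\eta = g(V,\cdot)$ is closed. That alone only makes $V$ locally a gradient, so I will not argue via topology. Instead I will show directly that
$$ V = \frac{1}{\lambda}\nabla(\scalar + 2E) $$
up to nothing further, which exhibits $V$ as a global gradient with soliton function $f = \frac{1}{\lambda}(\scalar+2E)$. The idea is to combine the identity \cref{eqn: Q of V} with the symmetry of $\nabla V$ and the contracted Bianchi identity.

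\textbf{Step 1.} Since $\nabla V$ is symmetric, $g(\nabla_X V, Y) = \frac12 (\lie{V}g)(X,Y)$. The soliton equation \cref{eqn: Ricci Soliton equation} then gives $\nabla_X V = \frac{\lambda}{2}X - \Qricci X$. Consequently
$$ \nabla_V V = \frac{\lambda}{2}V - \Qricci V. $$
Also $\nabla E$ satisfies $g(\nabla E, X) = g(\nabla_X V, V) = g(\nabla_V V, X)$ by symmetry, so $\nabla E = \nabla_V V = \frac{\lambda}{2}V - \Qricci V$.

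\textbf{Step 2.} I will obtain a second expression for $\Qricci V$ involving $\nabla \scalar$. Using $\nabla_X V = \frac{\lambda}{2}X - \Qricci X$, I compute $\Delta V = \trace \nabla^2 V = -(\divergence \Qricci) = -\frac12 \nabla\scalar$ by the contracted Bianchi identity. Comparing with the left half of \cref{eqn: Q of V}, $-2\Delta V = 2\Qricci V$, gives $\Qricci V = \frac12\nabla\scalar$. (Alternatively, I can use the Ricci identity $\sum_i \curvature(E_i,X)V$ contracted against the formula for $\nabla V$; both routes give the same identity.)

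\textbf{Step 3.} Substituting Step 2 into Step 1 yields $\nabla E = \frac{\lambda}{2}V - \frac12\nabla \scalar$, that is, $\lambda V = \nabla(\scalar + 2E)$. Since $\lambda \neq 0$, this gives $V = \nabla\big(\frac{1}{\lambda}(\scalar+2E)\big)$, so the soliton is gradient.

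The only delicate point is Step 2: checking that $\Delta V$, computed as the trace of $\nabla^2 V$, really equals $-\divergence \Qricci$ with the paper's sign conventions. I will verify this in a parallel orthonormal frame at a point, where $\sum_i \nabla_{E_i}(\frac{\lambda}{2}E_i - \Qricci E_i) = -\sum_i (\nabla_{E_i}\Qricci)E_i$.
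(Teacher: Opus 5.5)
Your proof is correct and follows essentially the same route as the paper: both obtain $\nabla E = \frac{\lambda}{2}V - \Qricci V$ from the symmetry of $\nabla V$, then combine the contracted Bianchi identity with $\Delta V = -\Qricci V$ to get $\Qricci V = \frac{1}{2}\nabla \scalar$, which gives $\lambda V = \nabla(\scalar + 2E)$. The only cosmetic difference is that you compute $\Delta V = -\frac{1}{2}\nabla\scalar$ directly from $\nabla_X V = \frac{\lambda}{2}X - \Qricci X$, whereas the paper gets it by evaluating $\divergence(\lie{V}g)$ in two ways under the symmetry hypothesis.
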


\begin{proof}
    When $\nabla V$ is symmetric, we have
    \begin{gather*}
        \lie{V}g (X,Y) = 2g(\nabla_X V, Y) = 2g(X, \nabla_Y V) \text{ and } \\
        (\nabla_Z (\lie{V}g))(X,Y) = 2g(\nabla^2_{Z,X} V, Y) = 2g(X, \nabla^2_{Z,Y} V) .
    \end{gather*}
    It then follows that (also see proof of \Cref{prop: Divergence of L V g}),
    $$ (\divergence (\lie{V} g) ) Y = 2g (\Delta V, Y) = 2\ricci(V,Y) + 2 Y(\divergence V) . $$
    Therefore, using \cref{eqn: Ricci Soliton equation} and the second contracted Bianchi identity, one obtains
    \begin{equation*}
        -\nabla\scalar = -2 \divergence\Qricci = \divergence (\lie{V} g) = 2 \Delta V = 2 ( \Qricci V + \nabla(\divergence V) ) .
    \end{equation*}
    Using \cref{eqn: Divergence of V} above,
    \begin{equation}\label{eqn: Q V when nabla V is symetric}
        \Qricci V = \frac{1}{2} \nabla\scalar = - \Delta V.
    \end{equation}
    Now, the definition of $E$ and the symmetry of $\nabla V$ implies
    $$ X E = g(\nabla_X V, V) =  \frac{1}{2} \lie{V} g (X,V) = \frac{\lambda }{2} g(X,V) - \ricci (X,V), $$
    and thus,
    $$ \nabla E = \frac{\lambda}{2} V - \Qricci V .$$
    Comparing with \cref{eqn: Q V when nabla V is symetric},
    $$ \lambda V = \nabla \brackets{ 2 E + S }  . $$
\end{proof}

Now we  give the different and shorter proof of the following lemma  than \cite[p. 660]{MR4518934}.
In what follows we will work with
the cut off function $\phi$ on $M$ which is supported 
on $B(p, 2R)$ and satisfying $\abs{\nabla \phi} \leq \frac{C}{R}$ on $B(p, 2R)$.

\begin{lemma}\label{lem: Subharmonic with finite energy is harmonic}
    For a subharmonic function $f$ on a noncompact manifold $(M^n,g)$, if $\abs{\nabla f} \in \mathcal{L}^1 (M) $, then $\Delta f = 0$.
\end{lemma}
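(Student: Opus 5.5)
Since $\Delta f \geq 0$ pointwise, it suffices to show $\int_M \Delta f \, \derivative{\Vol}_g = 0$, or, more robustly, that $\int_{B(p,R)} \Delta f \, \derivative{\Vol}_g \to 0$ as $R \to \infty$. I will do this by testing $\Delta f$ against the cut-off function $\phi$ described just before the statement. I take $\phi$ with $0 \leq \phi \leq 1$, $\phi \equiv 1$ on $B(p,R)$, $\phi$ supported in $B(p,2R)$, and $\abs{\nabla \phi} \leq \frac{C}{R}$. Completeness of $(M^n,g)$, as is standing for solitons, guarantees that such $\phi$ is compactly supported.

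First, since $\phi$ has compact support, integration by parts (the divergence theorem applied to $\phi \nabla f$) gives
$$ \int_M \phi \, \Delta f \, \derivative{\Vol}_g = - \int_M g(\nabla \phi, \nabla f) \, \derivative{\Vol}_g . $$
Next, because $\nabla\phi$ vanishes outside the annulus $B(p,2R)\setminus B(p,R)$, the right-hand side is bounded in absolute value by
$$ \frac{C}{R} \int_{B(p,2R)\setminus B(p,R)} \abs{\nabla f} \, \derivative{\Vol}_g \leq \frac{C}{R} \int_M \abs{\nabla f} \, \derivative{\Vol}_g . $$
This tends to $0$ as $R \to \infty$ because $\abs{\nabla f} \in \mathcal{L}^1(M)$. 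In fact the annulus integral alone tends to zero, so even the factor $1/R$ is not needed.

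Finally, since $\Delta f \geq 0$ and $\phi \geq 0$ with $\phi = 1$ on $B(p,R)$, we have
$$ 0 \leq \int_{B(p,R)} \Delta f \, \derivative{\Vol}_g \leq \int_M \phi \, \Delta f \, \derivative{\Vol}_g \to 0 . $$
Monotone convergence then gives $\int_M \Delta f = 0$. A nonnegative continuous function with zero integral vanishes identically, so $\Delta f \equiv 0$.

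I expect no step to be a real obstacle. The only point needing care is the existence of the cut-off functions with the stated gradient bound, which relies on completeness. One builds $\phi$ from the distance function $r(x) = d(p,x)$ as a Lipschitz function of $r$, then smooths it or uses the Lipschitz version directly in the integration by parts. Noncompactness is used only so that the exhaustion by balls is nontrivial; in the compact case the statement follows directly from the divergence theorem.
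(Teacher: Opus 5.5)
Your proof is correct and follows the paper's argument essentially verbatim. Both test $\Delta f$ against the cut-off $\phi$, integrate by parts so the integral lives on the annulus $B(p,2R)\setminus B(p,R)$, bound it by $\frac{C}{R}\int \abs{\nabla f}$, and conclude by positivity of $\Delta f$ and monotone convergence. Your version is slightly more careful than the paper's: you state that $\phi \equiv 1$ on $B(p,R)$, which makes the monotone convergence step work, and you note that completeness is needed for the cut-off functions to exist.
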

\begin{proof}
    Fix point $p$, and let $A_R = B(p,2R)\setminus B(p,R)$.  We consider 
   $\phi$ to be the cutoff function supported in $B(p,2R)$,  such that 
   $\abs{\nabla \phi}  \leq \frac{C}{R}$.  Then,
    \begin{multline*}
        \int_M \phi \Delta f \, \derivative {\Vol}_g
            = - \int_{A_R} g\brackets{\nabla \phi, \nabla f} \, \derivative {\Vol}{_g}
            \leq  \int_{A_R} \abs{\nabla \phi} \abs{\nabla f} \, \derivative {\Vol}{_g} \\
            \leq \frac{C}{R} \int_{A_R}  \abs{\nabla f} \, \derivative {\Vol}{_g} \to 0 \text{ as } R\to\infty.
    \end{multline*}
    Since the integrand on the LHS is positive, the result follows from the monotone convergence theorem.
    \end{proof}

In \cite{MR3546774}, it was proven that for any (generic) Ricci soliton with soliton vector field $V$, the Omori-Yau maximum principle holds for the operator $\Delta_V$.
Effectively, we have
\begin{theorem}[\cite{MR3546774}]\label{thm:Cheng Yau maximum principle}
Let $(M^n, g,V,\lambda)$ be a Ricci soliton.
Then for any $u\in C^{2}(M)$ with $u^*=\sup_{M}u<\infty$,  there exists a sequence of points $x_{k}\in M$ such that
\begin{equation*}
    \lim_{k\to \infty} u(x_k) > u^*-\frac{1}{k}, \quad \abs{\nabla u} (x_k) < \frac{1}{k}, \quad \Delta_V u (x_k) < \frac{1}{k}.
\end{equation*}
\end{theorem}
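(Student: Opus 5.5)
The statement is the Omori--Yau maximum principle for the drifted Laplacian $\Delta_V$. I would prove it with the function-theoretic criterion of Pigola--Rigoli--Setti. That criterion says the Omori--Yau principle holds for an operator $L=\Delta-g(V,\nabla\cdot)$ if there is a proper function $\gamma\geq 0$ with
\begin{equation*}
 \abs{\nabla\gamma}\leq C\sqrt{\gamma}, \qquad L\gamma\leq C\sqrt{\gamma\, G(\sqrt{\gamma})}
\end{equation*}
outside a compact set. Here $G$ is positive and nondecreasing with $1/\sqrt{G}\notin L^1(+\infty)$, and the inequalities may hold in the barrier sense. I would take $\gamma=r^2$, where $r=d(p,\cdot)$ for a fixed point $p$, and $G(t)=t^2$. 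So the whole task reduces to a drifted Laplacian comparison $\Delta_V r\leq C(1+r)$ away from $p$.

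To get this comparison, fix $x$ outside $B(p,1)$ but not in the cut locus of $p$ (cut points are handled by Calabi's barrier trick). Let $\sigma:[0,r(x)]\to M$ be a unit-speed minimizing geodesic from $p$ to $x$. By the second variation formula with the test function $\varphi(t)=t$ on $[0,1]$ and $\varphi\equiv 1$ on $[1,r]$,
\begin{equation*}
 \Delta r(x)\leq \int_0^{r}\brackets{(n-1)\varphi'^2-\varphi^2\ricci(\sigma',\sigma')}\,\derivative t .
\end{equation*}
The soliton equation \cref{eqn: Ricci Soliton equation} gives $\ricci(\sigma',\sigma')=\frac{\lambda}{2}-\frac{\derivative}{\derivative t}g(V,\sigma')$. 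I would integrate the derivative term by parts. The boundary term at $t=r$ is exactly $g(V,\nabla r)(x)$, which is cancelled by the drift term in $\Delta_V r$. The remaining term $\int_0^1 2t\,g(V,\sigma')\,\derivative t$ is bounded by $\sup_{B(p,1)}\abs{V}$. This yields
\begin{equation*}
 \Delta_V r(x)\leq (n-1)+\sup_{B(p,1)}\abs{V}+\frac{\abs{\lambda}}{2}\,r(x)\leq C(1+r(x)).
\end{equation*}
The key point is that no growth assumption on $V$ or on the curvature is needed: the soliton structure turns the Ricci term into a total derivative.

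With this in hand, $\gamma=r^2$ satisfies $\abs{\nabla\gamma}=2r=2\sqrt{\gamma}$ and $\Delta_V\gamma=2r\Delta_V r+2\leq C\gamma$ off a compact set, so the criterion applies with $G(t)=t^2$. For completeness I would also sketch the standard argument from these estimates. Given $u$ with $u^*<\infty$, consider $u_\varepsilon=u-\varepsilon\log(1+\gamma)$. This function attains a maximum at some $x_\varepsilon$, perhaps after Calabi's trick at cut points. At $x_\varepsilon$ we have $\nabla u=\varepsilon\nabla\log(1+\gamma)$ and $\Delta_V u\leq\varepsilon\Delta_V\log(1+\gamma)$. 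Both right-hand sides are $O(\varepsilon)$ because $\abs{\nabla\gamma}/(1+\gamma)$ and $\Delta_V\gamma/(1+\gamma)$ are bounded. Choosing the base point of the comparison appropriately, one also gets $u(x_\varepsilon)\to u^*$. Taking $\varepsilon=\varepsilon_k\to 0$ then produces the required sequence $x_k$.

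I expect the main obstacle to be the comparison step. One has to handle the non-smoothness of $r$ at cut points, either via barriers or via Calabi's trick, and to check carefully that the boundary term from the integration by parts has the right sign and exactly cancels the drift. The remaining steps follow the standard scheme.
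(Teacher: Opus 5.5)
Your proof is correct. The paper gives no argument of its own for this theorem and simply quotes it from Catino--Mastrolia--Monticelli--Rigoli. Your route is the standard proof of that result: along a minimizing geodesic the soliton equation rewrites $\ricci(\sigma',\sigma')$ as $\frac{\lambda}{2}-\frac{\derivative}{\derivative t}g(V,\sigma')$, so the boundary term cancels the drift and gives $\Delta_V r\leq C(1+r)$ with no growth assumption on $V$; you then apply the Pigola--Rigoli--Setti criterion with $\gamma=r^2$ and Calabi's trick at cut points. This is the same total-derivative device the paper itself uses in Section~\ref{finite-volume}. One small point: no change of base point is needed to get $u(x_\varepsilon)\to u^*$, since $u(x_\varepsilon)\geq u(y)-\varepsilon\log(1+r(y)^2)$ for every fixed $y$.
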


\vspace{0.1in}

\UpperBoundOnE*
\begin{proof}
For a nontrivial Ricci soliton, the energy function $E$ is not identically zero.
	\begin{enumerate}
		\item \Cref{eqn: ricci V V} implies
	    		\begin{equation}\label{V}
    		    		\Delta E + \lambda E = \abs{\nabla V}^2 + g(V, \nabla E) .
    			\end{equation}
    			If $E$ attains a maximum at $p$, then $\nabla E |_p= 0$ and $\Delta E |_p \leq 0 $. Thus, $\lambda \geq 0$. 
    		
    			If further, $\ricci (V,V) \leq 0$, the other part of \cref{eqn: ricci V V} implies
    			$$ \Delta E = \abs{\nabla V}^2 - \ricci (V,V) \geq 0. $$
    			Thus $E$ is subharmonic and if it attains a maximum, it must be constant. 
    			Then
 		   	$\abs{\nabla V} = 0$ and $\lambda E =0$, which gives the result.
    		\item The Omori-Yau maximum principle (\Cref{thm:Cheng Yau maximum principle}) implies the existence of a sequence $(x_k)$ such that
    			\begin{equation*}
        			\lim_{k\to \infty} E(x_k) = E^*, ~ \abs{\nabla E} (x_k) < \frac{1}{k}, ~ \Delta_V E (x_k) < \frac{1}{k}.
	    		\end{equation*}
    			Noting $g(V,\nabla E) \geq -\abs{V}\abs{\nabla E} = -\sqrt{2E}\abs{\nabla E}$ and evaluating \cref{V} at $x_k$
    			\begin{align*}
        			\lambda E (x_k) &= \abs{\nabla V}^2 (x_k) - \Delta_V E (x_k) \\
            			&> \abs{\nabla V}^2 (x_k) - \frac{1}{k} 
	    		\end{align*}
    			As $k\to \infty$, we see that $\lambda E^* \geq 0 $, which implies $E=0$ or $\lambda \geq 0$. 
    		\item From \cref{eqn: ricci V V} and the hypothesis, we have $\Delta E = \abs{\nabla V}^2 - \ricci (V,V) \geq 0 $. 
 Consequently, by the hypothesis that  $\abs{\nabla E} \in L^{1}(M)$ and  from \Cref{lem: Subharmonic with finite energy is harmonic},
 we obtain that  $\Delta E = 0$ and hence $\abs{\nabla V}^2 = \ricci(V,V) = 0 $. 
	\end{enumerate}
 \end{proof} 

\begin{example}
Consider    $(\mathbb{R}^2, g, V,\lambda)$   with 
    $$ g=\frac{2}{1+y^2} \brackets{ \derivative x^2 + \derivative y^2 } ,$$
    $V=-x\partial_x -y\partial_y$  and $\lambda=-1$. 
    Its scalar curvature $\scalar = \frac{1-y^2}{1+y^2}$.
    Its energy function 
    $$ E = \frac{x^2 + y^2}{1+y^2} $$
    is not bounded above and also the soliton is expanding.
\end{example}

\begin{example}
    The Sol3 soliton $(\mathbb{R}^3 ,g , V, \lambda ) $ is noncompact, nongradient expanding Ricci soliton with  
    $$ g = e^{2t} \derivative x^2 + e^{-2t} \derivative y^2 + \derivative t^2, $$
    $V= -2x \partial_x +2y\partial_y  $ and $\lambda=-4$. 
    It has constant scalar curvature $\scalar = -2$. 
    See \cite{MR4888461} for a detailed exposition.
    Its energy function 
    $$ E = 2 \left( e^{2t} x^2 + e^{-2t} y^2 \right) $$
    is not bounded above. 
\end{example}


\section{$L^1$-Liouville type theorem with respect to energy function}\label{liou}
    In this section, we prove \Cref{thm: no harmonic function with finite energy,thm: No V-harmonic function with constant energy and scalar bounded below}. To handle the equality case of the lower bound on the scalar curvature in \Cref{thm: no harmonic function with finite energy}, we require a technical lemma.
    We take a brief look at Ricci solitons of constant scalar curvature before proving \Cref{prop: orientable RS constant scalar finite volume mod V notin L1}.

In \cite{MR3415603} gradient Ricci solitons of constant scalar curvature were investigated.  
It is shown that for such solitons the possible values of scalar curvature is of the form 
$\frac{k \lambda}{2}$ for $0\leq k \leq n$. 
The main idea of their proof depends on the theory of isoparametric functions, and analysis of \cref{eqn: S + 2E - lambda f}.
However, an analogue of  \cref{eqn: S + 2E - lambda f} is absent in the nongradient case. 
Nevertheless, we have the following results.
Note that  by \cref{eqn: Divergence of V}  constant $\scalar$ implies that  $\divergence V$ is constant.

\begin{proposition}
    A compact Ricci soliton $(M^n,g,V,\lambda)$  with constant scalar curvature is Einstein. 
\end{proposition}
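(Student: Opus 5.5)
Since $\scalar$ is constant, \cref{eqn: Divergence of V} shows that $\divergence V = \frac{n\lambda}{2} - \scalar$ is a constant. The plan is to integrate two identities over the closed manifold $M$. First, by the divergence theorem, $\int_M \divergence V \, \derivative{\Vol}_g = 0$. Because $\divergence V$ is constant, this forces $\divergence V \equiv 0$, so that
$$ \scalar = \frac{n\lambda}{2}. $$

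Next, substitute this into \cref{eqn: laplacian Scalar in traceless ricci}. With $\scalar$ constant, $\Delta\scalar = 0$ and $V\scalar = 0$, and we have just shown $\divergence V = 0$. The identity therefore collapses to
$$ -2\abs{\mathring{\Qricci}}^2 = 0, $$
which holds pointwise, so $\mathring{\Qricci} \equiv 0$. Thus $\ricci = \frac{\scalar}{n} g$ and $(M,g)$ is Einstein. Note that this second step uses no integration.

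As a cross-check, \cref{eqn: laplacian Scalar in norm ricci} gives $\lambda\scalar = 2\abs{\Qricci}^2 \geq 2\scalar^2/n$. Combined with $\scalar = n\lambda/2$, this becomes an equality, which again yields $\mathring{\Qricci} = 0$.

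If one also wants to remark on triviality, the Ricci soliton equation now reads $\lie{V} g = (\lambda - 2\scalar/n) g = 0$, so $V$ is Killing and the soliton is trivial. I expect no real obstacle here. The only point needing care is that constancy of $\divergence V$ together with compactness (no boundary) forces it to vanish, and this follows directly from the divergence theorem.
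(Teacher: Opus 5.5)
Your argument is correct, but it is organized differently from the paper's. The paper integrates the divergence-type identity \cref{eqn: V scalar norm of traceless Ricci} from \Cref{lapE}. Since $\scalar$ is constant, $V\scalar=0$, and the terms $\divergence(\nabla E)$ and $\divergence(\nabla_V V)$ integrate to zero on the closed manifold. This leaves $\int_M \bigl(\frac{1}{n}(\divergence V)^2+\abs{\mathring{\Qricci}}^2\bigr)\,\derivative{\Vol}_g=0$, so $\divergence V$ and $\mathring{\Qricci}$ vanish at once. You instead apply Stokes' theorem only to the constant function $\divergence V$, which gives $\scalar=\frac{n\lambda}{2}$. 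You then conclude pointwise from \cref{eqn: laplacian Scalar in traceless ricci}, and that second step is exactly item (2) of \Cref{Cor: bounds on constant scalar curvature}.

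The paper's route stays within the energy-function identity. That identity rests only on \Cref{lem: divergence of TX} and \cref{eqn: Q of V}, not on the Yano-formula computation behind \cref{eqn: laplacian Scalar in traceless ricci}, and it yields both vanishing statements in a single integration. Your route isolates the one global input, namely forcing $\divergence V\equiv 0$. It therefore carries over directly to complete noncompact solitons of constant scalar curvature with finite volume and $\sqrt{E}\in\mathcal{L}^1(M)$, by using Gaffney's theorem in place of Stokes, as in \Cref{prop: orientable RS constant scalar finite volume mod V notin L1}. Integrating \cref{eqn: V scalar norm of traceless Ricci} in that setting would instead require integrability of $\nabla E+\nabla_V V=\lambda V-2\Qricci V$, not just of $V$.
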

\begin{proof}
    Follows directly from \cref{eqn: V scalar norm of traceless Ricci} using Stokes' theorem.
\end{proof}

\begin{proposition}\label{Cor: bounds on constant scalar curvature}
     Let $(M^n,g,V,\lambda)$ be a Ricci soliton with $\Delta_V \scalar \geq 0$, then
    \begin{equation}
        0 \leq \scalar \leq \frac{n\lambda}{2} ~\text{ OR }~ \frac{n\lambda}{2} \leq \scalar \leq 0.
    \end{equation}
    Further, 
    \begin{enumerate}
        \item $\lambda=0 $ or $\scalar=0$ implies $M$ is Ricci flat.
        \item $\scalar=\frac{n\lambda}{2}$ implies $M$ is Einstein.
    \end{enumerate}
    In particular, the conclusion holds  for solitons of constant scalar curvature.
    \end{proposition}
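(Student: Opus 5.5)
The plan is to use the $V$-drifted evolution identity \cref{eqn: laplacian Scalar in norm ricci}, which reads
$$ \Delta_V \scalar = \lambda \scalar - 2\abs{\Qricci}^2 . $$
Combining this with $\abs{\Qricci}^2 \geq \scalar^2/n$ (Cauchy--Schwarz on the trace) gives
$$ 0 \leq \Delta_V \scalar = \lambda\scalar - 2\abs{\Qricci}^2 \leq \lambda \scalar - \frac{2}{n}\scalar^2 = \frac{2}{n}\scalar\left(\frac{n\lambda}{2} - \scalar\right). $$
This holds pointwise. Hence at every point $\scalar(\frac{n\lambda}{2}-\scalar)\geq 0$, so $\scalar$ lies between $0$ and $\frac{n\lambda}{2}$. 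If $\lambda\geq0$ this gives $0\le \scalar\le n\lambda/2$, and if $\lambda\le 0$ it gives $n\lambda/2\le\scalar\le 0$. No maximum principle or completeness is needed; the whole argument is algebraic.

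For the equality cases I would return to the chain of inequalities. If $\lambda=0$, the bound forces $\scalar=0$ everywhere. If $\scalar\equiv 0$, the right-hand side vanishes, so the chain gives $0\le -2\abs{\Qricci}^2$. Thus $\Qricci=0$ and $M$ is Ricci flat. If $\scalar\equiv\frac{n\lambda}{2}$, the right-hand side is again zero. Then $\abs{\Qricci}^2=\scalar^2/n$, i.e.\ $\mathring{\Qricci}=0$, and $M$ is Einstein. Equivalently one can use \cref{eqn: laplacian Scalar in traceless ricci} with $\divergence V=0$ from \cref{eqn: Divergence of V}. In the Ricci flat case one could add that $\frac12\lie{V}g=\frac{\lambda}{2}g$ with $\lambda=0$ makes $V$ Killing, but that is not needed.

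For constant scalar curvature, $\Delta_V\scalar=0\geq0$ trivially, so the hypothesis holds. There is one subtlety in reading "$\scalar=0$" as a pointwise versus global statement: the Ricci-flat conclusion needs $\scalar\equiv0$ on an open set, or globally, to conclude $\Qricci=0$ there. I read it globally. The main (minor) obstacle is therefore only bookkeeping the sign cases; the key step is the Cauchy--Schwarz estimate inserted into \cref{eqn: laplacian Scalar in norm ricci}.
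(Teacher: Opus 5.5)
Your proof is correct and is essentially the paper's own argument. The paper inserts $\Delta_V\scalar\geq 0$ into \cref{eqn: laplacian Scalar in norm ricci} and \cref{eqn: laplacian Scalar in traceless ricci}, and your Cauchy--Schwarz step $\abs{\Qricci}^2\geq \scalar^2/n$ is exactly the passage between those two identities via $\abs{\mathring{\Qricci}}^2=\abs{\Qricci}^2-\scalar^2/n$; you also rightly keep inequalities where the paper writes equalities, which hold only when $\Delta_V\scalar=0$. One quibble: your closing caveat is unnecessary, because $\Delta_V\scalar\geq 0$ holds pointwise, so at any single point where $\scalar=0$ you already get $0\leq -2\abs{\Qricci}^2$ there, and no open set is needed.
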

\begin{proof}
The proof follows  simply  by employing   $\Delta_V \scalar \geq 0$ in
 \cref{eqn: laplacian Scalar in norm ricci} and \cref{eqn: laplacian Scalar in traceless ricci}.        
In fact, respectively we obtain,   
    \begin{align*}
        \lambda\scalar &= 2\abs{\Qricci}^2 \geq 0 , \\
        \frac{2\scalar}{n} \left( \frac{n\lambda}{2} -\scalar \right) &= 2 \abs{\mathring{\Qricci}}^2 \geq 0.
    \end{align*}
\end{proof}

Gaffney's Stokes theorem states that for a complete (orientable) Riemannian manifold, if $\abs{V} $ and $\divergence V$ are integrable, then $\int (\divergence V) \, \derivative \Vol_g =0$. This implies the following 
\begin{lemma}\label{prop: orientable RS constant scalar finite volume mod V notin L1}
    Let $(M^n,g,V,\lambda)$ be an (orientable) Ricci soliton with constant scalar curvature $\scalar \neq \frac{n\lambda}{2}$ of finite volume, then $\sqrt{E}\not\in \mathcal{L}^1 (M) $.
\end{lemma}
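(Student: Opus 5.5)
We argue by contradiction, applying Gaffney's version of Stokes' theorem directly to the soliton vector field $V$. Suppose that $\sqrt{E}\in\mathcal{L}^1(M)$. Since $\abs{V} = \sqrt{2E}$, this is equivalent to $\abs{V}\in\mathcal{L}^1(M)$.

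By \cref{eqn: Divergence of V}, we have
$$ \divergence V = \frac{n\lambda}{2} - \scalar . $$
Because the scalar curvature is constant, $\divergence V$ is a constant, say $c$, and the hypothesis $\scalar \neq \frac{n\lambda}{2}$ gives $c \neq 0$. As $M$ has finite volume, the constant function $c$ is integrable, so $\divergence V\in\mathcal{L}^1(M)$.

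The manifold is complete and orientable, and both $\abs{V}$ and $\divergence V$ are integrable, so Gaffney's theorem applies and yields
$$ 0 = \int_M \divergence V \,\derivative{\Vol}_g = c\,\Vol_g(M). $$
Since $c\neq 0$, this forces $\Vol_g(M)=0$, which is impossible for a nonempty manifold. Hence $\sqrt{E}\notin\mathcal{L}^1(M)$.

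The argument has no real obstacle. The only points to check are that $\Vol_g(M)>0$, and that constancy of $\scalar$ together with finite volume is exactly what makes $\divergence V$ integrable, so that Gaffney's hypotheses hold.
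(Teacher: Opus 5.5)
Your proof is correct and takes the same approach as the paper. Constancy of $\scalar$ and finite volume make $\divergence V = \frac{n\lambda}{2}-\scalar$ integrable, and $\abs{V}=\sqrt{2E}$ is integrable by assumption, so Gaffney's Stokes theorem gives $0=\big(\frac{n\lambda}{2}-\scalar\big)\Vol_g(M)$, which contradicts $\scalar\neq\frac{n\lambda}{2}$. The only difference is that you spell out the final step explicitly.
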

\begin{proof}
As  $M$ has finite volume, constant $\divergence V$ implies $\divergence V$ is integrable. 
If $\sqrt{E}$ is integrable, so is $\abs{V}$, and then Gaffney's Stokes theorem implies $\scalar =  \frac{n\lambda}{2}$, which is contradictory to the given hypothesis.
\end{proof}

    We are now ready to prove \Cref{thm: no harmonic function with finite energy,thm: No V-harmonic function with constant energy and scalar bounded below}.
    Both proofs rely on the divergence theorem to transfer the derivative from $u$ to a cut-off function and then applying the respective Bochner formulas.


\noHarmonicWithFiniteEnergy*
\begin{proof}
    For any point $p$, let $\phi$ be the standard cut-off function on $\mathrm{B}_p(2r)$.
    Let $A_r = \mathrm{B}_p(2r) \setminus \mathrm{B}_p(r) $.
    Let $u$ be a harmonic function satisfying \cref{eqn: finite dirichlet energy}.

    From \cref{eqn: Ricci Soliton equation},
    \begin{equation}\label{eqn: FDE equation one}
         \int_M \phi^2 \ricci (\nabla u, \nabla u)  \, \derivative{\Vol} 
            = \frac{\lambda}{2} \int_M \phi^2  \abs{\nabla u}^2  \, \derivative{\Vol} 
                - \int_M \phi^2 g (\nabla_{\nabla u} V, \nabla u) \, \derivative{\Vol} 
    \end{equation}
    
    Since $\phi$ has compact support, noting $\Delta u = 0$ and applying the divergence theorem to the vector fields $ (Vu) \phi^2 \nabla u $ and $ \phi^2 \abs{\nabla u}^2 V $ yields 
    \begin{align*}
        -\int_M  \phi^2 g(\nabla_{\nabla u} V, \nabla u )  \, \derivative{\Vol}  
            &= \int_M  \phi^2 \hessian u (V,\nabla u)  \, \derivative{\Vol}
                + \int_M (Vu) g( \nabla u, \nabla \phi^2)  \, \derivative{\Vol}   \text{ and } \\
        2\int_M \phi^2 \hessian u (V,\nabla u) \, \derivative{\Vol}
            &= - \int_M  \phi^2 \abs{\nabla u}^2  (\divergence V) \, \derivative{\Vol} 
                - \int_M \abs{\nabla u}^2 g(V,\nabla \phi^2) \, \derivative{\Vol} 
    \end{align*}
    Combined, the above equations give
    \begin{multline}\label{eqn: FDE equation two}
        -\int_M  \phi^2 g(\nabla_{\nabla u} V, \nabla u )  \, \derivative{\Vol}  
            = \int_M (Vu) g( \nabla u, \nabla \phi^2)  \, \derivative{\Vol}  \\
                - \frac{1}{2} \int_M  \phi^2 \abs{\nabla u}^2  (\divergence V) \, \derivative{\Vol} 
                - \frac{1}{2} \int_M \abs{\nabla u}^2 g(V,\nabla \phi^2) \, \derivative{\Vol} 
    \end{multline}

    Now 
    \begin{align*}
        \int_M \phi^2 \Delta \brackets{\frac{1}{2} \abs{\nabla u}^2}  \, \derivative{\Vol}
            &= - \frac{1}{2} \int_M g ( \nabla \phi^2 , \nabla (\abs{\nabla u}^2) ) \, \derivative{\Vol}  \\
            &\leq \frac{1}{2} \int_M 2\phi \abs{\nabla \phi}  2 \abs{\nabla u} \abs{\nabla \abs{\nabla u} }\, \derivative{\Vol} \\
            &\leq \frac{1}{2} \int_M \left( \phi^2 \abs{\nabla \abs{\nabla u} }^2 
                + 4 \abs{\nabla \phi}^2 \abs{\nabla u}^2 \right)  \, \derivative{\Vol}
    \end{align*}
    Using \cref{eqn: Bochner formula for gradient} for harmonic $u$ and Kato's inequality,
    \begin{equation*}
        \Delta \brackets{\frac{1}{2} \abs{\nabla u}^2} 
        = \ricci \brackets{\nabla u, \nabla u} 
         + \abs{ \hessian u}^2 \geq \ricci \brackets{\nabla u, \nabla u} + \abs{ \nabla \abs{\nabla u} }^2. 
    \end{equation*}
    Therefore,
    \begin{equation}\label{eqn: FDE equation three}
        \int_M \phi^2 \ricci (\nabla u, \nabla u) \, \derivative{\Vol} 
            \leq -\frac{1}{2} \int_M  \phi^2 \abs{\nabla \abs{\nabla u} }^2 \, \derivative{\Vol}
                + 2 \int_M \abs{\nabla \phi}^2 \abs{\nabla u}^2  \, \derivative{\Vol}
    \end{equation}

    Combining \cref{eqn: FDE equation one,eqn: FDE equation two,eqn: FDE equation three}
    \begin{multline}
        \int_M \frac{1}{2}  (\lambda - \divergence V) \phi^2 \abs{\nabla u}^2  \, \derivative{\Vol}
        + \frac{1}{2} \int_M  \phi^2 \abs{\nabla \abs{\nabla u} }^2 \, \derivative{\Vol}\\
            \leq 
                 \frac{1}{2} \int_M g( \abs{\nabla u}^2 V-2(Vu) \nabla u , \nabla \phi^2 ) \,\derivative{\Vol}
                + 2 \int_M \abs{\nabla \phi}^2 \abs{\nabla u}^2  \, \derivative{\Vol}
    \end{multline}
    Lastly, noting \cref{eqn: Divergence of V} and  
    $$ \abs{\abs{\nabla u}^2V -2(Vu)  \nabla u }^2 = 2E \abs{\nabla u}^4 ,$$
    we obtain
    \begin{multline}
        \int_M \left( \scalar - (n-2)\frac{\lambda}{2} \right) \phi^2 \abs{\nabla u}^2  \, \derivative{\Vol}
        + \frac{1}{2} \int_M  \phi^2 \abs{\nabla \abs{\nabla u} }^2 \, \derivative{\Vol}\\
            \leq \frac{1}{2} \int_M \sqrt{2E} \abs{\nabla u}^2   \abs{ \nabla \phi^2 } \,\derivative{\Vol}
                + 2 \int_M \abs{\nabla \phi}^2 \abs{\nabla u}^2  \, \derivative{\Vol}
    \end{multline}
    Since $\nabla \phi$ is nonzero only on $A_r$, the integral on the right hand side is over $A_r$. \Cref{eqn: finite dirichlet energy} implies that the right hand side vanishes as $r\to \infty$.

    Since $\scalar \geq \frac{(n-2)}{2} \lambda $, we have that $\abs{\nabla u}$ is constant and at least one of the following is true: $u$ is constant or $\scalar = \frac{(n-2)}{2} \lambda$. 
    Suppose $u$ is not constant and subsequently $\scalar = \frac{(n-2)}{2} \lambda$.
    Constant $\abs{\nabla u}$ and the hypothesis \cref{eqn: finite dirichlet energy} imply that the manifold has finite volume and $\sqrt{E}\in \mathcal{L}^1 (M)$. 
    This is in contradiction to \Cref{prop: orientable RS constant scalar finite volume mod V notin L1}.
\end{proof}

\NoVHarmonic*
\begin{proof}
    For any point $p$, let $\phi$ be the standard cut-off function on $\mathrm{B}_p(2r)$.
    Let $A_r = \mathrm{B}_p(2r) \setminus \mathrm{B}_p(r) $.
    Throughout this proof, we will denote $\mathcal{U} = \frac{1}{2} \abs{\nabla u}^2$.
    \begin{align*}
        - \int_M \brackets{V \mathcal{U} } \phi^2 \, \derivative {\Vol}_g
            &= \int_M \mathcal{U} \left( \phi^2 \divergence V 
                + 2\phi V\phi \right) \, \derivative {\Vol}_g \\
            &\leq \int_M \mathcal{U} (\phi^2 \divergence V) \, \derivative {\Vol}_g
                + \int_{A_r} \mathcal{U} (2\phi \abs{V}\abs{\nabla \phi}) 
                    \, \derivative {\Vol}_g \\
            &\leq \int_M \mathcal{U} (\phi^2 \divergence V) \, \derivative {\Vol}_g
                + \int_{A_r} \mathcal{U} 
                    \left( \phi^2 \abs{V}^2  + \abs{\nabla \phi}^2  \right) 
                    \, \derivative {\Vol}_g \\
            &\leq \int_M \mathcal{U} (\phi^2 \divergence V) \, \derivative {\Vol}_g
                + \int_{A_r} \mathcal{U} 
                    \left( 2\phi^2 E + \abs{\nabla \phi}^2  \right) 
                    \, \derivative  {\Vol}_g .
    \end{align*}
    Also,
    \begin{align*}
        \int_M \brackets{ \Delta\mathcal{U} } \phi^2 \, \derivative {\Vol}_g 
            &= - \int_M  2\phi g \brackets{ \nabla\mathcal{U} , \nabla \phi  }
                \, \derivative {\Vol}_g \\
            &= - \int_{A_r} 2\phi \hessian u ( \nabla u, \nabla\phi  )
                \, \derivative {\Vol}_g \\
            &\leq \int_{A_r} 2\phi \abs{\hessian u} \abs{\nabla u} \abs{\nabla \phi}
                \, \derivative {\Vol}_g \\
            &\leq \int_{A_r} \brackets{ \phi^2 \abs{\hessian u}^2 
                + \abs{\nabla u}^2 \abs{\nabla \phi}^2 }
                \, \derivative {\Vol}_g \\
            &\leq \int_{A_r} \brackets{ \phi^2 \abs{\hessian u}^2 
                + 2\mathcal{U} \abs{\nabla \phi}^2 }
                \, \derivative {\Vol}_g .
    \end{align*}
    Using the above inequalities, 
    \begin{multline*}
        \int_M  \brackets{ \Delta_V \mathcal{U}} \phi^2 \, \derivative {\Vol}_g 
            \leq \int_M \mathcal{U}  \left( \phi^2 \divergence V  \right) 
                \, \derivative  {\Vol}_g 
                + \int_{A_r} \brackets{ \phi^2 \abs{\hessian u}^2  }
                \, \derivative {\Vol}_g \\
            +\int_{A_r} \brackets{  3\mathcal{U} \abs{\nabla \phi}^2}
                \, \derivative {\Vol}_g 
            + \int_{A_r} (2\phi^2 \mathcal{U} E) \, \derivative {\Vol}_g.
    \end{multline*}
    Comparing this with \cref{eqn: Bochner formula w.r.t. Delta V} for a $V$-harmonic function $u$
    and noting, 
    $$ \int_M \abs{\hessian u}^2 \phi^2 \, \derivative {\Vol}_g \geq 
        \int_{A_r} \abs{\hessian u}^2 \phi^2 \, \derivative  {\Vol}_g, $$
    we obtain
    \begin{equation*}
        \int_M \brackets{ \lambda - \divergence V } \mathcal{U} \phi^2
                 \, \derivative  {\Vol}_g 
            \leq \int_{A_r} \brackets{  3\mathcal{U} \abs{\nabla \phi}^2}
                \, \derivative {\Vol}_g 
            + \int_{A_r} (2\phi^2 \mathcal{U} E) \, \derivative {\Vol}_g.
    \end{equation*}
    Lastly, taking $r\to \infty$, and using \cref{eqn: Divergence of V},
    \begin{equation*}
        0 \leq \int_M  \brackets{ \scalar - \frac{n-2}{2} \lambda } \brackets{ \frac{1}{2} \abs{\nabla u}^2 }
                  \, \derivative {\Vol}_g
            \leq \lim_{r\to \infty} \int_{A_r} \brackets{ \frac{1}{2} \abs{\nabla u}^2 } \left(2 E \right) 
                \, \derivative {\Vol}_g =0.
    \end{equation*}
    By hypothesis on scalar curvature, the above inequality implies that $u$ is constant. 
\end{proof}




\begin{example}[p. 79,  \cite{MR4624811}]\label{ex: hamilton cigar soliton}
    Hamilton's Cigar soliton, $(\mathbb{R}^2 , g, \nabla f, 0)$ is a noncompact steady gradient Ricci soliton where 
    $$ g = \frac{ 4 \left( \derivative x^2 + \derivative y^2 \right) }{1+x^2+y^2} , $$
    and $f= - \ln \brackets{ 1+ x^2 + y^2 }$. 
    Its scalar curvature $\scalar = \frac{1}{ 1+  x^2 +  y^2 } > 0$.
    The gradient of $f$ is 
    $$ \nabla f = - \frac{1}{2} (x \partial_x + y \partial_y). $$ 
    Since, the energy function 
    $$ E= \frac{1}{2} \frac{x^2 + y^2 }{1+ x^2 + y^2} \leq \frac{1}{2} $$
    is bounded above, \Cref{thm: No V-harmonic function with constant energy and scalar bounded below,thm: no harmonic function with finite energy} imply
    that there is no $f$-harmonic or harmonic function $u$ on $(\mathbb{R}^2,g)$ such that $\abs{\nabla u}^2  \in L^{1}(M)$. 
    Thus, the manifold has at most one nonparabolic end.
\end{example}

\begin{remark}
Note that this can be generalised to the broader Bryant soliton case which is also a steady Ricci soliton.
Since the scalar curvature of Bryant soliton is  positive,  and  by our previous discussion 
$\scalar + 2E$ is constant (see \cref{eqn: S + 2E - lambda f}). Consequently, $E$ is bounded above. 
Hence,  the same conclusion as above holds.
\end{remark}

\section{Finiteness of $E$-Volume of Shrinking Ricci Solitons}\label{finite-volume}

We now turn our attention to proving \Cref{thm: bounded Ricci implies finite E volume}.
We first begin with a lower estimate on the energy function of a Ricci soliton with bounded Ricci curvature.
\begin{lemma}
	Let $(M^n,g,V,\lambda)$ be a noncompact shrinking Ricci soliton with the bounded Ricci curvature. 
Then  there exists an $L_0$ such that for every 
	unit speed geodesic $\gamma: [0,L] \to M $ with $L\geq L_0$ and 
	$\gamma (0) = p \in M $,
	$$ E (L) \geq \frac{\lambda^2}{8} L^2 + \frac{\lambda}{2} a L + \frac{a^2}{2},  $$
	where $a$ is a constant that depends only on the geometry of the soliton on the unit ball $B(p,1)$.
\end{lemma}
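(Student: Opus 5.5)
The plan is to follow the classical argument for gradient shrinkers (Hamilton, Cao--Zhou, Naber), replacing $\nabla f$ by $V$ and $\hessian f$ by the symmetric part of $\nabla V$. The key observation is that for a unit speed geodesic $\gamma$ the Ricci soliton equation \cref{eqn: Ricci Soliton equation} still controls the radial derivative of $V$. Set
$$ h(t) = g(V, \gamma'(t)). $$
Since $\nabla_{\gamma'}\gamma' = 0$,
$$ h'(t) = g(\nabla_{\gamma'} V, \gamma') = \tfrac{1}{2}(\lie{V}g)(\gamma',\gamma') = \tfrac{\lambda}{2} - \ricci(\gamma',\gamma'). $$
Only the symmetric part of $\nabla V$ appears, so the nongradient nature of $V$ causes no trouble. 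Integrating over $[0,L]$ gives
$$ h(L) = h(0) + \tfrac{\lambda}{2}L - \int_0^L \ricci(\gamma',\gamma')\,dt. $$

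The main step, and the expected obstacle, is a bound on $\int_0^L \ricci(\gamma',\gamma')\,dt$ independent of $L$. Here I would take $\gamma$ to be minimizing, which is the setting where the estimate is available and presumably what is intended; the hypothesis should be read that way. The bound comes from the second variation of arc length. Choose a parallel orthonormal frame $\{e_i\}_{i=1}^{n-1}$ orthogonal to $\gamma'$ and the variation fields $\psi(t)e_i$, where $\psi(t)=t$ on $[0,1]$, $\psi=1$ on $[1,L-1]$ and $\psi(t)=L-t$ on $[L-1,L]$. Minimality gives
$$ \int_0^L \psi^2 \ricci(\gamma',\gamma')\,dt \le (n-1)\int_0^L (\psi')^2\,dt = 2(n-1). $$
Since $|\ricci|\le K$ globally, the defect $\int (1-\psi^2)\ricci(\gamma',\gamma')\,dt$ is bounded by $2K$, as it lives only on $[0,1]\cup[L-1,L]$. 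Hence
$$ \int_0^L \ricci(\gamma',\gamma')\,dt \le 2(n-1) + 2K =: C_0 \qquad (L\ge 2). $$
The bounded Ricci hypothesis is used precisely here, to control the far endpoint where no local information is available.

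Next, $h(0)\ge -\sup_{B(p,1)}|V|$. Define
$$ a = -\sup_{B(p,1)}|V| - C_0, $$
which depends only on the soliton near $p$ (and on $n$ and the curvature bound). This gives $h(L)\ge \tfrac{\lambda}{2}L + a$. Because $\lambda>0$, there is $L_0\ge 2$ with $\tfrac{\lambda}{2}L+a>0$ for $L\ge L_0$. For such $L$, Cauchy--Schwarz gives $|V|(\gamma(L))\ge h(L)\ge \tfrac{\lambda}{2}L+a>0$. Squaring and dividing by $2$ yields
$$ E(\gamma(L)) = \tfrac12|V|^2 \ge \tfrac12\Big(\tfrac{\lambda}{2}L+a\Big)^2 = \tfrac{\lambda^2}{8}L^2 + \tfrac{\lambda}{2}aL + \tfrac{a^2}{2}, $$
which is the claimed inequality.

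The only delicate points are the restriction to minimizing geodesics in the second variation step and the requirement $L_0\ge 2$ so that the cutoff $\psi$ is well defined. Completeness of $(M,g)$ supplies, for every point at distance $L$ from $p$, a minimizing geodesic to which the lemma applies; this is the form needed later for the $e^{-E}$-volume estimate.
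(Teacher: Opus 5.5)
Your proof is correct and follows essentially the same route as the paper: the same trapezoidal cutoff in the second variation inequality, the identity $\frac{d}{dt}g(V,\gamma') = \frac{\lambda}{2} - \ricci(\gamma',\gamma')$ (your $h$ is the paper's $V^n$), the global Ricci bound to control the endpoint defect, and then Cauchy--Schwarz and squaring. Your explicit restriction to minimizing geodesics, and your note that $a$ also depends on $n$ and the Ricci bound, are both implicit in the paper's argument and its later use on the segment domain. They are clarifications rather than deviations: the lemma as literally stated for every geodesic fails, for example for closed geodesics on a shrinking cylinder.
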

\begin{proof}
	We first assume that $L\geq 2$.
	Let $E^{i}(p)$ denote an orthonormal basis along $\gamma$ with $E^n(t) = {\gamma}'(t)$ and 
   $E^{i}(t)$  denote  the parallel transport of $E^i$ over $\gamma(t)$.  Write
$V = V^i E_i$. Then for any Lipschitz function $\phi:[0,L] \to [0,1]$ with $\phi(0) = \phi(L)=0$, the second variation formula implies
	$$ (n-1) \int_0^L (\phi ' )^2 \, \derivative t 
		\geq \int_0^L \phi^2 \ricci (\gamma' , \gamma ' ) \, \derivative t. $$
	 Using the Ricci soliton equation and 
	$ \frac{1}{2} \lie{V} g (\gamma' , \gamma ' ) = \dot{V^n}, $
	in the above equation, we get
	$$ (n-1) \int_0^L (\phi ' )^2 \, \derivative t 
		\geq \frac{\lambda}{2} \int_0^L \phi^2 \, \derivative t 
		- \int_0^L \phi^2 \dot{V^n} \, \derivative t  . $$
	Subsequently,
	$$ (n-1) \int_0^L (\phi ' )^2 \, \derivative t 
		\geq \frac{\lambda}{2} \int_0^L \phi^2 \, \derivative t 
		- \int_0^L \dot{V^n} \, \derivative t		
		+ \int_0^L (1-\phi^2) \dot{V^n} \, \derivative t . $$
Using  the Ricci soliton equation again and noting the bound on Ricci curvature,
	\begin{align*}
		\int_0^L \left(1-\phi^2\right) \dot{V^n} \, \derivative t &= 
		\int_0^L \left(1-\phi^2\right) \left( \frac{\lambda}{2} - \ricci \left(\gamma' , \gamma'\right) \right) \, \derivative t \\
		&\geq \int_0^L \left(1-\phi^2\right) \left( \frac{\lambda}{2} - C \right) \, \derivative t. 
	\end{align*}
	Therefore,
	\begin{equation}\label{eqn: Lower estimate on Vn}
		(n-1) \int_0^L (\phi ' )^2 \, \derivative t 
		\geq - V^n(L) + V^n(0)		
		+ \int_0^L \left( \frac{\lambda}{2} - C \right)  \, \derivative t  
		+ C \int_0^L \phi^2 \, \derivative t .
	\end{equation}
	Now we choose,
	\begin{equation*}
		\phi = 
			\begin{cases}
				t, 	& 0 \leq t \leq 1 ; \\
				1, 	& 1 \leq t \leq L-1 ; \\
				L-t, 	& L-1 \leq t \leq L. \\
			\end{cases} 
	\end{equation*}
	Substituting in \cref{eqn: Lower estimate on Vn},
	$$ V^n (L) \geq V^n (0) - 2 (n-1) +\frac{\lambda}{2} L - \frac{4}{3} C , $$
	or
	$$ \sqrt{2E} (L) \geq V^n(L) \geq \frac{\lambda}{2} L + a, $$
	where $a$ is a constant that depends only on the geometry of the soliton on the unit ball $B(p,1)$. Since $\lambda>0$, we can choose $L_0$ large enough so that the right hand side of the above equation is positive. Then  we have for $L\geq L_0$, 
	$$ E (L) \geq \frac{\lambda^2}{8}L^2 + \frac{\lambda}{2} a L + \frac{a^2}{2}. $$
\end{proof}

The proof of \Cref{thm: bounded Ricci implies finite E volume} employs the same technique as in \cite{naber2006geometryanalysisriccisolitons}. 
The only major point of difference is the usage of estimates for $E$ in place of $f$. 
\shrinkingRShasfiniteEVolume*
\begin{proof}
	Using exponential coordinates at $p$ we have, since $\ricci\geq -C$, by the standard comparison that 
	$\sqrt{ \mathrm{det} g} \leq (\sinh \sqrt{C} r)^{n-1} \lesssim e^{(n-1)\sqrt{C} r} . $ Here  notation, $s\lesssim t$ means $s \leq At$ for some constant $A$. 
	\begin{multline*}
		{\Vol}_E (M) = \int_{S^{n-1}} \int_0^\infty e^{-E} \sqrt{\mathrm{det} g} \, \derivative r \, \derivative s^{n-1} \\ 
		\lesssim \int_0^{L_0} e^{-E} e^{(n-1)\sqrt{C} r} \derivative r + 
		\int_{L_0}^\infty e^{ -\frac{\lambda^2}{8}r^2 + \left( (n-1)\sqrt{C} -\frac{\lambda}{2} a \right) r - \frac{a^2}{2} } \, \derivative r < \infty .
	\end{multline*}
 It should be observed  that  $\sqrt{ \mathrm{det} g}(x) = 0$ for $x$ outside the segment domain of $p$.  	
Hence, all of the above integrals  are meaningful.

Now we lift to the universal cover $ (\tilde{M}, \tilde{g} ) $ of $(M,g)$ and note that $ ( \tilde{M}, \tilde{g} , \tilde{V}, \lambda )$ is a shrinking Ricci soliton, where $\tilde{V}$ is the pullback of $V$.
Furthermore, the corresponding energy function $\tilde{E}$ is also the pullback of the energy function $E$ of $M$. 
Since both the integrals in
\begin{equation*}
	\int_{\tilde{M}} e^{-\tilde{E}} \, \derivative {\Vol}_{\tilde{g}}  = 
		\abs{ \pi_1(M) } \int_M e^{-E} \, \derivative {\Vol}_g 
\end{equation*}
are finite, it follows that $ \pi_1(M)$ is finite.
\end{proof}
We end this article by recovering the result of \cite{naber2006geometryanalysisriccisolitons} for gradient shrinking Ricci solitons using the \Cref{thm: bounded Ricci implies finite E volume}.

\begin{corollary}\label{cor: shrinking GRS has finite f volume}
	A complete gradient shrinking Ricci soliton $(M^n,g, f,\lambda)$ with bounded Ricci curvature has finite $f$-volume.
\end{corollary}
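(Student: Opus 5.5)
The plan is to deduce the corollary from \Cref{thm: bounded Ricci implies finite E volume} by comparing $f$ with the energy function $E=\frac12\abs{\nabla f}^2$ of the gradient soliton $(M^n,g,\nabla f,\lambda)$. The theorem gives $\int_M e^{-E}\,\derivative{\Vol}_g<\infty$. It therefore suffices to show that $f\geq \frac12\abs{\nabla f}^2 - C_0$ for some constant $C_0$, since then $e^{-f}\leq e^{C_0}e^{-E}$ pointwise and the $f$-volume is finite.

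To obtain this comparison, I use \cref{eqn: S + 2E - lambda f}, which in the notation $E=\frac12\abs{\nabla f}^2$ reads
\[
\scalar + 2E - \lambda f = c
\]
for a constant $c$. Hence
\[
\lambda f = \scalar + 2E - c .
\]
Bounded Ricci curvature gives $\abs{\scalar}\leq nC$, so $\lambda f \geq 2E - nC - c$. Since $\lambda>0$, this yields
\[
f \geq \frac{2}{\lambda}E - \frac{nC+c}{\lambda}.
\]

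The remaining step is to compare $\frac{2}{\lambda}E$ with $E$. If $\lambda\leq 2$, then $\frac{2}{\lambda}E\geq E$ because $E\geq 0$, and we are done. If $\lambda>2$, only $f\geq \frac{2}{\lambda}E - C'$ holds, and a small extra step is needed. This is the one point that requires care. One option is to rescale the metric so that $\lambda=1$. Another is to rerun the proof of \Cref{thm: bounded Ricci implies finite E volume} with $\frac{2}{\lambda}E$ in place of $E$. By the preceding lemma, $E$ grows like $\frac{\lambda^2}{8}r^2$, so $\frac{2}{\lambda}E$ still grows quadratically, like $\frac{\lambda}{4}r^2$. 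That growth beats the volume bound $e^{(n-1)\sqrt{C}r}$ in exponential coordinates, so the same integral estimate gives $\int e^{-\frac{2}{\lambda}E}<\infty$.

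Either way, $\int_M e^{-f}\,\derivative{\Vol}_g<\infty$. In the rerun route, the only things to check are that the positive constant in front of $r^2$ survives and that the exceptional region $r\leq L_0$ has finite contribution. The latter holds because $f$ is continuous and that region is compact.
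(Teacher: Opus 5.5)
Your proposal is correct and follows the paper's argument. Both use $S + |\nabla f|^2 - \lambda f = \mathrm{const}$ to get $f \geq E - C_0$ and then invoke the finite $E$-volume theorem, and your rescaling option is exactly the paper's normalization ``without loss of generality $\lambda = 2$'' (together with shifting $f$ to kill the constant). The only difference is that you bound $S$ below using the assumed Ricci bound ($|S|\le nC$), whereas the paper cites the known fact $S \geq 0$ on complete shrinking gradient solitons; both work, and yours is slightly more self-contained.
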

\begin{proof}
	Without loss of generality, we can assume that $\lambda = 2$ and that \cref{eqn: S + 2E - lambda f} gives
	$$ \scalar + \abs{\nabla f}^2 - 2 f = 0 .$$
	Also, it is known for shrinking gradient Ricci solitons that $\scalar \geq 0$ (see Theorem 2.14, \cite{MR4624811}). Combining these imply 
	$$ E = \frac{1}{2} \abs{ \nabla f }^2 \leq f .$$
	\Cref{thm: bounded Ricci implies finite E volume} then implies
	$$ \int_M e^{-f} \,\derivative {\Vol}_g \leq \int_M e^{-E} \,\derivative {\Vol}_g < \infty . $$
\end{proof}

\printbibliography

@misc{perelman2002entropyformularicciflow,
      title={The entropy formula for the Ricci flow and its geometric applications}, 
      author={Grisha Perelman},
      year={2002},
      eprint={math/0211159},
      archivePrefix={arXiv},
      primaryClass={math.DG},
      url={https://arxiv.org/abs/math/0211159}, 
}

@article {MR2507581,
    AUTHOR = {Petersen, Peter and Wylie, William},
     TITLE = {Rigidity of gradient {R}icci solitons},
   JOURNAL = {Pacific J. Math.},
  FJOURNAL = {Pacific Journal of Mathematics},
    VOLUME = {241},
      YEAR = {2009},
    NUMBER = {2},
     PAGES = {329--345},
      ISSN = {0030-8730,1945-5844},
   MRCLASS = {53C24 (53C21 53C25)},
  MRNUMBER = {2507581},
MRREVIEWER = {Esther\ Cabezas Rivas},
       DOI = {10.2140/pjm.2009.241.329},
       URL = {https://doi.org/10.2140/pjm.2009.241.329},
}

@incollection {MR4518934,
    AUTHOR = {Yau, Shing-Tung},
     TITLE = {Some function-theoretic properties of complete {R}iemannian
              manifold and their applications to geometry},
 BOOKTITLE = {Selected works of {S}hing-{T}ung {Y}au. {P}art 1. 1971--1991.
              {V}ol. 2. {M}etric geometry and harmonic functions},
     PAGES = {173--184},
      NOTE = {Reprint of [0417452]},
 PUBLISHER = {Int. Press, Boston, MA},
      YEAR = {2019},
      ISBN = {978-1-57146-377-7},
   MRCLASS = {},
  MRNUMBER = {4518934},
}

@book {MR284950,
    AUTHOR = {Yano, Kentaro},
     TITLE = {Integral formulas in {R}iemannian geometry},
    SERIES = {Pure and Applied Mathematics},
    VOLUME = {No. 1},
 PUBLISHER = {Marcel Dekker, Inc., New York},
      YEAR = {1970},
     PAGES = {ix+156},
   MRCLASS = {53.72},
  MRNUMBER = {284950},
MRREVIEWER = {T.\ Nagano},
}

@incollection {MR2648937,
    AUTHOR = {Cao, Huai-Dong},
     TITLE = {Recent progress on {R}icci solitons},
 BOOKTITLE = {Recent advances in geometric analysis},
    SERIES = {Adv. Lect. Math. (ALM)},
    VOLUME = {11},
     PAGES = {1--38},
 PUBLISHER = {Int. Press, Somerville, MA},
      YEAR = {2010},
      ISBN = {978-1-57146-143-8},
   MRCLASS = {53C21 (53C25 53C44)},
  MRNUMBER = {2648937},
MRREVIEWER = {Manuel\ Fern\'andez-L\'opez},
}

@book {MR4624811,
    AUTHOR = {Chow, Bennett},
     TITLE = {Ricci solitons in low dimensions},
    SERIES = {Graduate Studies in Mathematics},
    VOLUME = {235},
 PUBLISHER = {American Mathematical Society, Providence, RI},
      YEAR = {2023},
     PAGES = {xvi+339},
      ISBN = {9781470474287},
   MRCLASS = {53E20 (30F20 53-02 53C25 58-02)},
  MRNUMBER = {4624811},
MRREVIEWER = {Xiaolong\ Li},
       DOI = {10.1090/gsm/235},
       URL = {https://doi.org/10.1090/gsm/235},
}

@article {MR2448435,
    AUTHOR = {Eminenti, Manolo and La Nave, Gabriele and Mantegazza, Carlo},
     TITLE = {Ricci solitons: the equation point of view},
   JOURNAL = {Manuscripta Math.},
  FJOURNAL = {Manuscripta Mathematica},
    VOLUME = {127},
      YEAR = {2008},
    NUMBER = {3},
     PAGES = {345--367},
      ISSN = {0025-2611,1432-1785},
   MRCLASS = {53C21 (53C44)},
  MRNUMBER = {2448435},
MRREVIEWER = {Esther\ Cabezas Rivas},
       DOI = {10.1007/s00229-008-0210-y},
       URL = {https://doi.org/10.1007/s00229-008-0210-y},
}

@book {MR3469435,
    AUTHOR = {Petersen, Peter},
     TITLE = {Riemannian geometry},
    SERIES = {Graduate Texts in Mathematics},
    VOLUME = {171},
   EDITION = {Third},
 PUBLISHER = {Springer, Cham},
      YEAR = {2016},
     PAGES = {xviii+499},
      ISBN = {978-3-319-26652-7},
   MRCLASS = {53-01 (53C20 53C21 53C23)},
  MRNUMBER = {3469435},
       DOI = {10.1007/978-3-319-26654-1},
       URL = {https://doi.org/10.1007/978-3-319-26654-1},
}

@article {MR4393949,
    AUTHOR = {Ghosh, Amalendu},
     TITLE = {On {B}ach almost solitons},
   JOURNAL = {Beitr. Algebra Geom.},
  FJOURNAL = {Beitr\"age zur Algebra und Geometrie. Contributions to Algebra
              and Geometry},
    VOLUME = {63},
      YEAR = {2022},
    NUMBER = {1},
     PAGES = {45--54},
      ISSN = {0138-4821,2191-0383},
   MRCLASS = {53C25 (53C15 53C20)},
  MRNUMBER = {4393949},
       DOI = {10.1007/s13366-021-00565-4},
       URL = {https://doi.org/10.1007/s13366-021-00565-4},
}

@article {sym12020289,
AUTHOR = {Deshmukh, Sharief and Alsodais, Hana},
TITLE = {A Note on Ricci Solitons},
JOURNAL = {Symmetry},
VOLUME = {12},
YEAR = {2020},
NUMBER = {2},
ARTICLE-NUMBER = {289},
URL = {https://www.mdpi.com/2073-8994/12/2/289},
ISSN = {2073-8994},
DOI = {10.3390/sym12020289}
}

@article {MR3415603,
    AUTHOR = {Fern\'andez-L\'opez, Manuel and Garc\'ia-R\'io, Eduardo},
     TITLE = {On gradient {R}icci solitons with constant scalar curvature},
   JOURNAL = {Proc. Amer. Math. Soc.},
  FJOURNAL = {Proceedings of the American Mathematical Society},
    VOLUME = {144},
      YEAR = {2016},
    NUMBER = {1},
     PAGES = {369--378},
      ISSN = {0002-9939,1088-6826},
   MRCLASS = {53C25 (53C20 53C44)},
  MRNUMBER = {3415603},
MRREVIEWER = {Ramiro\ Augusto\ Lafuente},
       DOI = {10.1090/proc/12693},
       URL = {https://doi.org/10.1090/proc/12693},
}

@article {MR4888461,
    AUTHOR = {Belarbi, Lakehal},
     TITLE = {Ricci solitons of the {$Sol_3$} {L}ie group},
   JOURNAL = {J. Indian Math. Soc. (N.S.)},
  FJOURNAL = {The Journal of the Indian Mathematical Society. New Series},
    VOLUME = {92},
      YEAR = {2025},
    NUMBER = {2},
     PAGES = {329--339},
      ISSN = {0019-5839,2455-6475},
   MRCLASS = {53C25 (53C30 53C50)},
  MRNUMBER = {4888461},
MRREVIEWER = {Hui\ Zhang},
}

@misc{naber2006geometryanalysisriccisolitons,
      title={Some Geometry and Analysis on Ricci Solitons}, 
      author={Aaron Naber},
      year={2006},
      eprint={math/0612532},
      archivePrefix={arXiv},
      primaryClass={math.DG},
      url={https://arxiv.org/abs/math/0612532}, 
}

@article {MR3546774,
    AUTHOR = {Catino, G. and Mastrolia, P. and Monticelli, D. D. and Rigoli,
              M.},
     TITLE = {Analytic and geometric properties of generic {R}icci solitons},
   JOURNAL = {Trans. Amer. Math. Soc.},
  FJOURNAL = {Transactions of the American Mathematical Society},
    VOLUME = {368},
      YEAR = {2016},
    NUMBER = {11},
     PAGES = {7533--7549},
      ISSN = {0002-9947,1088-6850},
   MRCLASS = {53C20 (53C25)},
  MRNUMBER = {3546774},
MRREVIEWER = {Jeffrey\ Jauregui},
       DOI = {10.1090/tran/6864},
       URL = {https://doi.org/10.1090/tran/6864},
}

@article {MR3063556,
    AUTHOR = {Mastrolia, Paolo and Rigoli, Marco and Rimoldi, Michele},
     TITLE = {Some geometric analysis on generic {R}icci solitons},
   JOURNAL = {Commun. Contemp. Math.},
  FJOURNAL = {Communications in Contemporary Mathematics},
    VOLUME = {15},
      YEAR = {2013},
    NUMBER = {3},
     PAGES = {1250058, 25},
      ISSN = {0219-1997,1793-6683},
   MRCLASS = {53C21},
  MRNUMBER = {3063556},
MRREVIEWER = {Uday\ Chand\ De},
       DOI = {10.1142/S0219199712500587},
       URL = {https://doi.org/10.1142/S0219199712500587},
}

@article {MR3721584,
    AUTHOR = {Sharma, Ramesh and Balasubramanian, S. and Uday Kiran, N.},
     TITLE = {Some remarks on {R}icci solitons},
   JOURNAL = {J. Geom.},
  FJOURNAL = {Journal of Geometry},
    VOLUME = {108},
      YEAR = {2017},
    NUMBER = {3},
     PAGES = {1031--1037},
      ISSN = {0047-2468,1420-8997},
   MRCLASS = {53C25 (53C44 53C55)},
  MRNUMBER = {3721584},
MRREVIEWER = {Adara\ M.\ Blaga},
       DOI = {10.1007/s00022-017-0392-0},
       URL = {https://doi.org/10.1007/s00022-017-0392-0},
}

@article {MR2373611,
    AUTHOR = {Wylie, William},
     TITLE = {Complete shrinking {R}icci solitons have finite fundamental
              group},
   JOURNAL = {Proc. Amer. Math. Soc.},
  FJOURNAL = {Proceedings of the American Mathematical Society},
    VOLUME = {136},
      YEAR = {2008},
    NUMBER = {5},
     PAGES = {1803--1806},
      ISSN = {0002-9939,1088-6826},
   MRCLASS = {53C20},
  MRNUMBER = {2373611},
MRREVIEWER = {Eduardo\ Garc\'ia-R\'io},
       DOI = {10.1090/S0002-9939-07-09174-5},
       URL = {https://doi.org/10.1090/S0002-9939-07-09174-5},
}

@article {MR1158340,
    AUTHOR = {Li, Peter and Tam, Luen-Fai},
     TITLE = {Harmonic functions and the structure of complete manifolds},
   JOURNAL = {J. Differential Geom.},
  FJOURNAL = {Journal of Differential Geometry},
    VOLUME = {35},
      YEAR = {1992},
    NUMBER = {2},
     PAGES = {359--383},
      ISSN = {0022-040X,1945-743X},
   MRCLASS = {53C21 (53C20 58G30)},
  MRNUMBER = {1158340},
MRREVIEWER = {Yang\ Lian\ Pan},
       URL = {http://projecteuclid.org/euclid.jdg/1214448079},
}

@article {MR3023848,
    AUTHOR = {Munteanu, Ovidiu and Sesum, Natasa},
     TITLE = {On gradient {R}icci solitons},
   JOURNAL = {J. Geom. Anal.},
  FJOURNAL = {Journal of Geometric Analysis},
    VOLUME = {23},
      YEAR = {2013},
    NUMBER = {2},
     PAGES = {539--561},
      ISSN = {1050-6926,1559-002X},
   MRCLASS = {53C44},
  MRNUMBER = {3023848},
MRREVIEWER = {Xiang\ Gao},
       DOI = {10.1007/s12220-011-9252-6},
       URL = {https://doi.org/10.1007/s12220-011-9252-6},
}

@article {MR2843238,
    AUTHOR = {Munteanu, Ovidiu and Wang, Jiaping},
     TITLE = {Smooth metric measure spaces with non-negative curvature},
   JOURNAL = {Comm. Anal. Geom.},
  FJOURNAL = {Communications in Analysis and Geometry},
    VOLUME = {19},
      YEAR = {2011},
    NUMBER = {3},
     PAGES = {451--486},
      ISSN = {1019-8385,1944-9992},
   MRCLASS = {53C23 (53C21 53C44)},
  MRNUMBER = {2843238},
MRREVIEWER = {Emil\ Saucan},
       DOI = {10.4310/CAG.2011.v19.n3.a1},
       URL = {https://doi.org/10.4310/CAG.2011.v19.n3.a1},
}

\vspace{1in}

\noindent
{\bf Acknowledgement:} The first author thanks Harish-Chandra Research Institute and Homi Bhabha National Institute (HBNI) for its research fellowship.

\section{Declarations}
	
	\begin{itemize}
		\item Funding: Not applicable. 
		
		\item Conflict of interest/Competing interests: The authors have no conflict of interest and no financial interests in this article.
		
		\item Ethics approval: The submitted work is original and not submitted to more than one journal for simultaneous consideration.
		
		\item Consent to participate: Not applicable.
		
		\item Consent for publication: Not applicable.
		
		\item Availability of data and materials: This manuscript has no associated data.
		
		\item Code availability: Not applicable.
		
		\item Authors' contributions: Conceptualisation, methodology, investigation, validation, writing-original draft, review, editing, and heading have been performed by all the paper's authors.
		
	\end{itemize}

\end{document}